\documentclass[12pt,a4paper]{amsart}
\usepackage[all]{xy}
\usepackage{amscd}
\usepackage{amsfonts}
\usepackage{amsmath}
\usepackage{amssymb}
\usepackage{amsthm}
\usepackage{bbm}
\usepackage[colorlinks,linkcolor=blue]{hyperref}
\usepackage{enumerate}
\usepackage{extarrows}
\usepackage{graphicx}
\usepackage{graphics}
\usepackage{mathrsfs}
\usepackage[mathscr]{eucal}
\usepackage[margin=2.9cm]{geometry}

\theoremstyle{plain}
\newtheorem{thm}{Theorem}[subsection]
\newtheorem{lema}[thm]{Lemma}

\newtheorem{ppst}[thm]{Proposition}

\theoremstyle{definition}
\newtheorem{defn}[thm]{Definition}
\newtheorem{rem}[thm]{Remark}
\newtheorem{exam}[thm]{Example}
\newtheorem{conj}{Conjecture}

\title[]{Examples on a Conjecture about Makar-Limanov Invariants of Affine Unique Factorization Domains}
\author{Ziqi Liu}

\address{Jilin University, Changchun, Jilin, China}
\email{liuzq0616@mails.jlu.edu.cn}

\thanks{This paper is an undergraduate research work. I thank professor Xiaosong Sun in Jilin University for his help and suggestions when I worked on this paper.}

\begin{document}

\begin{abstract}
  The author introduces a conjecture about Makar-Limanov invariants of affine unique factorization domains over a field of characteristic zero. Then the author finds that the conjecture does not always hold when $\mathbbm{k}$ is not algebraically closed and gives some examples where the conjecture holds.
\end{abstract}

\maketitle
\tableofcontents

\newpage
\section{Introduction}
Throughout the work, $A^{[n]}$ denotes the polynomial ring over a given ring $A$ with $n$ variables; $\mathbbm{k}$ denotes a field of characteristic zero; $\mathbb{A}^n$ denotes the affine space over $\mathbbm{k}$ of dimension $n$. An \textbf{affine $\mathbbm{k}$-variety} is an irreducible algebraic set over $\mathbbm{k}$; given an affine variety $V\subseteq\mathbb{A}^n$, define $\mathcal{I}(V):=\{f\in\mathbbm{k}^{[n]}:f(a)=0,\forall a\in V\}$; given a subring $I$ of $\mathbb{A}^{[n]}$, define $\mathcal{Z}(I):=\{a\in\mathbb{A}^{n}:f(a)=0,\forall f\in I\}$.

\vspace{3mm}
In paper \cite{ML01-1}, Makar-Limanov provides several conjunctures about Makar-Limanov invariant of an affine unique factorization domain. One of his conjectures is that
\begin{conj}
If $A$ is an affine UFD over $\mathbb{C}$, then $\mathcal{ML}(A)=\mathcal{ML}(A^{[1]})$.
\end{conj}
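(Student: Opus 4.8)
The plan is to split the claimed equality into the two inclusions $\mathcal{ML}(A^{[1]})\subseteq\mathcal{ML}(A)$ and $\mathcal{ML}(A)\subseteq\mathcal{ML}(A^{[1]})$, with essentially all of the difficulty in the second. Recall that $\mathcal{ML}(A)=\bigcap_{D}\ker D$, the intersection running over all locally nilpotent derivations (LNDs) $D$ of $A$. The first inclusion I would obtain directly from the definition by feeding in two families of LNDs of $A^{[1]}=A[t]$: the derivative $\partial_t$, whose kernel is exactly $A$, and for each LND $\delta$ of $A$ its extension $\bar\delta$ with $\bar\delta(t)=0$ and $\bar\delta|_A=\delta$. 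If $a\in\mathcal{ML}(A^{[1]})$, then $a\in\ker\partial_t=A$, and $\delta(a)=\bar\delta(a)=0$ for every $\delta$, so $a\in\mathcal{ML}(A)$. This half is purely formal and uses neither the factoriality of $A$ nor $\mathbbm{k}=\mathbb{C}$.

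For the reverse inclusion I must show that every LND $D$ of $A[t]$ annihilates every $a\in\mathcal{ML}(A)$. I would first clear away the LNDs that stabilize the base: if $D(A)\subseteq A$, then $D|_A$ is an LND of $A$, so $a\in\mathcal{ML}(A)\subseteq\ker(D|_A)$ and $D(a)=0$ immediately. The genuine content is thus confined to the $D$ with $D(A)\not\subseteq A$, where $t$ is truly entangled with $\mathrm{Spec}\,A$. For these I would try to \emph{transfer} the associated $\mathbb{G}_a$-action down to the base: since $A[t]$ is again a UFD, $\ker D$ is factorially closed and is itself a UFD, and the aim is to manufacture, from a local slice of $D$ and the degree filtration it induces, a nonzero LND $\tilde D$ of $A$ with $\ker\tilde D\subseteq(\ker D)\cap A$. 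If such a $\tilde D$ exists, then $\mathcal{ML}(A)\subseteq\ker\tilde D\subseteq(\ker D)\cap A\subseteq\ker D$, giving $D(a)=0$.

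To make this transfer manageable I would pass to the leading term of $D$ with respect to the filtration by $t$-degree, obtaining a homogeneous LND of the associated graded ring $A[\tau]\cong A[t]$; homogeneity rigidifies the interaction of the action with the projection $A[t]\to A$ and is the natural place to construct $\tilde D$ and to exploit factorial closedness, the plinth ideal, and the dimension count $\dim\ker D=\dim A$. I expect the UFD hypothesis to enter through this good behaviour of kernels, and the hypothesis $\mathbbm{k}=\mathbb{C}$ to be unavoidable at the transfer step itself.

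The step I expect to be the real obstacle is precisely this transfer. There is no general device for pushing an arbitrary $\mathbb{G}_a$-action on $\mathrm{Spec}\,A\times\mathbb{A}^1$ down to one on $\mathrm{Spec}\,A$, and whether $(\ker D)\cap A$ is large enough to contain the kernel of a nonzero LND of $A$ is exactly where the argument can fail. This matches the phenomenon recorded in the abstract, that the equality breaks down once $\mathbbm{k}$ is not algebraically closed: the LNDs of $A[t]$ that resist transfer should be those carrying an arithmetic obstruction, so any complete proof must use algebraic closedness essentially rather than cosmetically.
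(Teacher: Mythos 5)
The statement you are addressing is Conjecture \ref{conj} specialized to $\mathbbm{k}=\mathbb{C}$: it is an \emph{open conjecture}, and the paper does not prove it. What the paper actually does is (i) record the easy inclusion $\mathcal{ML}(A^{[1]})\subseteq\mathcal{ML}(A)$, (ii) verify the equality for several families (Danielewski domains over an algebraically closed field, Koras--Russell threefolds, Finston--Maubach domains), and (iii) exhibit a counterexample to the generalized statement over $\mathbbm{k}=\mathbb{R}$, namely $A=\mathbb{R}[x,y,z]/(x^2z-(y^2+1))$, an affine UFD with $\mathcal{ML}(A)=\mathbb{R}[x]$ (by Alhajjar's theorem) but $\mathcal{ML}(A^{[1]})=\mathbb{R}$. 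Your first inclusion is correct and is exactly what the paper asserts in the introduction: $\partial_t$ forces $\mathcal{ML}(A[t])\subseteq A$, and extending each locally nilpotent derivation $\delta$ of $A$ to $A[t]$ by $\delta(t)=0$ forces $\mathcal{ML}(A[t])\subseteq\mathcal{ML}(A)$. This part needs neither factoriality nor any hypothesis on $\mathbbm{k}$, as you say.

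Your second half, however, is not a proof but a research plan whose decisive step is missing, as you yourself acknowledge. The reduction to derivations $D\in\textup{LND}(A[t])$ with $D(A)\not\subseteq A$ is fine, but the ``transfer'' of such a $D$ to a nonzero $\tilde D\in\textup{LND}(A)$ with $\textup{Ker}(\tilde D)\subseteq\textup{Ker}(D)\cap A$ is precisely the unsolved content of the conjecture; no construction is offered, and none is known. Note also that this transfer strategy cannot work as stated when $A$ is rigid, i.e.\ $\mathcal{ML}(A)=A$, since then $A$ admits no nonzero locally nilpotent derivation at all; that case is instead covered by a separate known result (Theorem \ref{thm0}, quoted from \cite{ML98}), which your plan should invoke explicitly. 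The real counterexample over $\mathbb{R}$ does confirm your caveat that algebraic closedness must enter essentially, but a caveat is not an argument: the arithmetic obstruction you allude to is visible already in $x^2z=y^2+1$, where $y^2+1$ is irreducible over $\mathbb{R}$ but not over $\mathbb{C}$. As it stands, your proposal establishes only the trivial inclusion and leaves the conjecture exactly where the paper leaves it: open in general, and verified only in structured families of examples.
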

Here we are going to discuss a generalized version of this conjecture which originally put forward on complex field $\mathbb{C}$.
\begin{conj}\label{conj}
If $A$ is an affine UFD over $\mathbbm{k}$, then $\mathcal{ML}(A)=\mathcal{ML}(A^{[1]})$.
\end{conj}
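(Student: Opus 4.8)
The plan is to work directly from the definition $\mathcal{ML}(A)=\bigcap_{D}\ker D$, where $D$ ranges over all locally nilpotent derivations (LNDs) of $A$, and to split the asserted equality into its two inclusions. The inclusion $\mathcal{ML}(A^{[1]})\subseteq\mathcal{ML}(A)$ I expect to be routine. Writing $A^{[1]}=A[t]$, the coordinate derivation $\partial/\partial t$ is an LND of $A[t]$ with $\ker(\partial/\partial t)=A$, so $\mathcal{ML}(A^{[1]})\subseteq A$; moreover every LND $D$ of $A$ extends to an LND $\widetilde D$ of $A[t]$ via $\widetilde D t=0$, and then $\ker\widetilde D\cap A=\ker D$. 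Intersecting over all such $\widetilde D$ and using $\mathcal{ML}(A^{[1]})\subseteq A$ yields $\mathcal{ML}(A^{[1]})\subseteq\ker D$ for every $D$, hence $\mathcal{ML}(A^{[1]})\subseteq\mathcal{ML}(A)$.

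All the content lies in the reverse inclusion $\mathcal{ML}(A)\subseteq\mathcal{ML}(A^{[1]})$, i.e. in showing that every LND $E$ of $A[t]$ annihilates $\mathcal{ML}(A)$. The strategy I would try is to descend $E$ to $A$: filter $A[t]$ by degree in $t$, pass to the leading operator of $E$ with respect to this filtration, and hope that the associated graded object restricts to, or manufactures, a nonzero LND $D$ of $A$ whose kernel is controlled by $\ker E$, so as to force $Ea=0$ for each $a\in\mathcal{ML}(A)$. Here I would lean on the hypothesis that $A$ is an affine UFD: kernels of LNDs are factorially closed, the plinth ideal and local-slice machinery are well behaved, and one can attempt an induction on $\dim A$.

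The step I expect to be the genuine obstacle is precisely this descent. There is no reason a priori that an LND of the product $\mathrm{Spec}(A)\times\mathbb{A}^1$ respects the product structure, and the associated $\mathbb{G}_a$-action may genuinely mix $t$ with $A$ in a way that no single LND of $A$ detects. Over an algebraically closed field one can bring geometric tools to bear—rational points on fibers, triviality of the relevant $\mathbb{G}_a$-torsors and forms—to try to force the descent, and even there the statement remains the open Makar-Limanov conjecture. Over a non-closed $\mathbbm{k}$ these tools fail, and the anticipated outcome is not a proof but a counterexample: one should seek an affine UFD $A$ carrying an anisotropic or twisted structure (a Danielewski- or quadric-type hypersurface with few rational points) for which $A[t]$ acquires extra LNDs absent from $A$. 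This is exactly the phenomenon the paper exploits, so rather than completing a proof I would pivot to constructing such $A$, while isolating the subcases—$\mathbbm{k}$ algebraically closed, or $A$ rigid—in which the descent does go through and the equality survives.
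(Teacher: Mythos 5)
Your assessment is exactly right and matches the paper: the statement is an open conjecture, not a theorem, the inclusion $\mathcal{ML}(A^{[1]})\subseteq\mathcal{ML}(A)$ is obtained just as you describe (extend each LND of $A$ by $t\mapsto 0$ and use $\partial/\partial t$ to get $\mathcal{ML}(A^{[1]})\subseteq A$), and no general proof of the reverse inclusion is attempted. The paper confirms your prediction of a counterexample over a non-closed field with precisely the quadric/Danielewski-type example you anticipate: for $A=\mathbb{R}[x,y,z]/(x^2z-(y^2+1))$, a UFD because $y^2+1$ is irreducible over $\mathbb{R}$, Alhajjar's theorem gives $\mathcal{ML}(A)=\mathbb{R}[x]$ while $\mathcal{ML}(A^{[1]})=\mathbb{R}$ (all Danielewski surfaces $x^nz=p(y)$ have isomorphic cylinders), and the positive cases (Danielewski domains over algebraically closed $\mathbbm{k}$, Koras--Russell threefolds, Finston--Maubach domains) are handled by computing both invariants directly rather than by any general descent of an LND from $A[t]$ to $A$.
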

Based on following parts, one can see that $\mathcal{ML}(A^{[1]})\subseteq\mathcal{ML}(A)$ and $\mathbbm{k}\subseteq\mathcal{ML}(A)$. Therefore, we only need to focus on $\mathcal{ML}(A^{[1]})\supseteq\mathcal{ML}(A)$ when $\mathcal{ML}(A)\neq\mathbbm{k}$.

\vspace{3mm}
In Section 2, the author introduces the definition of an affine unique factorization domain and some methods to identify affine unique factorization domains. Also, the author reviews the definition of a locally nilpotent derivation over a $\mathbbm{k}$-algebra and then introduce the Makar-Limanov invariant of a $\mathbbm{k}$-algebra.

\vspace{3mm}
In Section 3, the author lists some important results about Conjecture \ref{conj}. Based on those results, we only need to consider affine UFD $A$ over $\mathbbm{k}$ with $\mathcal{ML}(A)\neq A$ and $\mathcal{ML}(A)\neq\mathbbm{k}$. Then, the author proves that Conjecture \ref{conj} holds for Danielewski domains when $\mathbbm{k}$ is algebraically closed and is false when $\mathbbm{k}=\mathbb{R}$. Also, the author proves that Conjecture \ref{conj} is always true for Koras-Russell threefolds. Moreover, the author checks this conjecture on affine UFDs constructed by Finston and Maubach. In this paper, one of such affine UFDs is called a Finston-Maubach domain.

\vspace{3mm}
In section 4, the author makes several comments on this paper as well as the research topic.

\newpage
\section{Preliminaries}
\subsection{Affine unique factorization domains}
\begin{defn}
If a nonzero ring (with multiplicative identity) $R$ has no nonzero zero divisors, then it is a \textbf{domain}. If the domain $R$ is also commutative, then it is an \textbf{integral domain}.
\end{defn}
\begin{exam}
The ring of integers $\mathbb{Z}$ is an integral domain while the Hurwitz integer $\mathbb{H}$ is a domain but not an integral domain.
\end{exam}
\begin{defn}
Given a $\mathbbm{k}$-vector space $A$ and a binary operation $\cdot:A\times A\rightarrow A$, denote the product of $x$ and $y$ in $\mathbbm{k}$ by $xy$ here and the addition in $A$ by $+$, if\\
(1) $(\textbf{a}+\textbf{b})\cdot\textbf{c} = \textbf{a}\cdot\textbf{c} + \textbf{b}\cdot\textbf{c}$;\\
(2) $\textbf{c}\cdot(\textbf{a}+\textbf{b}) = \textbf{c}\cdot\textbf{a} + \textbf{c}\cdot\textbf{b}$;\\
(3) $(x\textbf{a})(y\textbf{b})=(x y)(\textbf{a}\cdot\textbf{b})$;\\
holds for all $\textbf{a},\textbf{b},\textbf{c}\in A$ and $x,y\in\mathbbm{k}$, then the pair $(A,\cdot)$ is an \textbf{algebra over $\mathbbm{k}$}, or simply a \textbf{$\mathbbm{k}$-algebra}, also denoted by $A$.
\end{defn}
\begin{defn}
If an integral domain $A$ containing $\mathbbm{k}$ is finitely generated as a $\mathbbm{k}$-algebra, then $A$ is an \textbf{affine domain over $\mathbbm{k}$}.
\end{defn}

\begin{thm} \quad\\
(1) Given an affine $\mathbbm{k}$-variety $V$, its coordinate ring $\mathbbm{k}[V]:=\mathbb{A}^{[n]}/\mathcal{I}(V)$ is an affine domain over $\mathbbm{k}$.\\
(2) If $\mathbbm{k}$ is algebraically closed, and let $A$ be an affine domain over $\mathbbm{k}$. Then there exist an affine $\mathbbm{k}$-variety, such that $A\cong\mathbbm{k}[V]$.
\begin{proof} (1) We know that $I:=\mathcal{I}(V)$ is prime since $V$ is irreducible, therefore $\mathbbm{k}[V]:=\mathbb{A}^{[n]}/I$ is an integral domain. Note that $\mathbbm{k}[V]$ is clearly a finitely generated $\mathbbm{k}$-algebra, it is an affine domain over $\mathbbm{k}$.\\
(2) Choose generators $a_1,\dots,a_n$ of $A$, the surjective homomorphism $F:\mathbbm{k}^{[n]}\rightarrow A$ sending $f$ to $f(a_1,\dots,a_n)$ yields $A\cong\mathbbm{k}^{[n]}/I$ where $I=\textup{Ker}(F)$. Since $A$ is an integral domain, $I$ is a prime ideal. Let $V:=\mathcal{Z}(I)$, we know $I=\mathcal{I}(V)$ from the Hilbert's Nullstellensatz, so $A\cong\mathbbm{k}[V]$.
\end{proof}
\end{thm}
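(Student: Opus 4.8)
The plan is to treat the two parts separately; each reduces to standard facts about prime ideals together with the correspondence between ideals and algebraic sets.

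For part (1), the goal is to verify the three defining properties of an affine domain for $\mathbbm{k}[V] = \mathbb{A}^{[n]}/\mathcal{I}(V)$. First I would observe that irreducibility of $V$ forces $\mathcal{I}(V)$ to be a prime ideal: if a product $fg$ lies in $\mathcal{I}(V)$, then $V \subseteq \mathcal{Z}(f) \cup \mathcal{Z}(g)$, and irreducibility confines $V$ to one of these closed sets, so one of $f$ or $g$ already vanishes on $V$. Hence the quotient $\mathbb{A}^{[n]}/\mathcal{I}(V)$ is an integral domain. It then remains to note that this quotient is finitely generated as a $\mathbbm{k}$-algebra, the images of the coordinate functions $x_1, \dots, x_n$ serving as generators, and that it contains $\mathbbm{k}$ as the image of the constant polynomials; together these show $\mathbbm{k}[V]$ is an affine domain.

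For part (2), I would start from a finite generating set $a_1, \dots, a_n$ of $A$ as a $\mathbbm{k}$-algebra and form the evaluation homomorphism $F \colon \mathbbm{k}^{[n]} \to A$, $f \mapsto f(a_1, \dots, a_n)$, which is surjective by the choice of generators. Writing $I = \ker F$ gives $A \cong \mathbbm{k}^{[n]}/I$, and since $A$ is an integral domain the ideal $I$ is prime. The decisive step is to identify $I$ with $\mathcal{I}(\mathcal{Z}(I))$: here the hypothesis that $\mathbbm{k}$ is algebraically closed lets me invoke Hilbert's Nullstellensatz to conclude $\mathcal{I}(\mathcal{Z}(I)) = \sqrt{I}$, and primeness gives $\sqrt{I} = I$. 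Setting $V = \mathcal{Z}(I)$, the primeness of $I = \mathcal{I}(V)$ makes $V$ irreducible, hence an affine $\mathbbm{k}$-variety, and the identification above yields $A \cong \mathbbm{k}^{[n]}/\mathcal{I}(V) = \mathbbm{k}[V]$.

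The main obstacle is precisely this last identification in part (2); the rest is formal bookkeeping with prime ideals and generators. The Nullstellensatz is unavoidable, and it is exactly why algebraic closure of $\mathbbm{k}$ is assumed: over a non-closed field $\mathcal{Z}(I)$ may be too small — possibly empty for a proper ideal — so the passage from ideals to varieties and back breaks down, which is the phenomenon the later counterexamples over $\mathbb{R}$ exploit.
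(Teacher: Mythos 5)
Your proof follows the same route as the paper's: irreducibility gives primeness of $\mathcal{I}(V)$ for part (1), and the evaluation homomorphism plus Hilbert's Nullstellensatz for part (2). You fill in two details the paper leaves implicit --- the closed-set argument for why $\mathcal{I}(V)$ is prime, and the observation that primeness of $I$ makes $V=\mathcal{Z}(I)$ irreducible and hence a genuine variety --- but the argument is essentially identical and correct.
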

\begin{rem}
Some very details of this proof can be found in book \cite{Ke05}. One can see that if $\mathbbm{k}$ is algebraically closed, then there is a bijection between the set of all affine domains over $\mathbbm{k}$ and the set of all affine varieties over $\mathbbm{k}$.
\end{rem}

\begin{defn}
If each element $x\neq0$ of an integral domain $A$ is a product of a unit in $A$ and prime elements $p_i$ in $A$, that is
\begin{center}$x= u\cdot p_1\cdot \dots \cdot p_m$\end{center}
then $A$ is a \textbf{unique factorization domain}, abbreviated to a UFD. If a UFD is also an affine domain over $\mathbbm{k}$, then it is a \textbf{affine unique factorization domain over $\mathbbm{k}$}, abbreviated to an affine UFD over $\mathbbm{k}$. In addition, an affine $\mathbbm{k}$-variety $X$ is called \textbf{factorial} if its coordinate ring $\mathbbm{k}[X]$ is an affine UFD.
\end{defn}
\begin{rem}
Since proving an integral domain to be a UFD is hard in general, it is natural to find special methods to tell an affine UFD over $\mathbbm{k}$ from an affine $\mathbbm{k}$-domain. That topic is also interesting but not what we would mainly discuss in this note. In this case, I will list several important results without proofs. Proofs of those results and relevant definitions can be found in \cite{Sa64}, \cite{Ke05} and \cite{Ha77}.
\end{rem}
\begin{lema}\label{UFDcheckway1}
Given a noetherian domain $A$, following statements are equivalent:\\
(1) $A$ is a UFD;\\
(2) every height one prime ideal of $A$ is principal;\\
(3) $A$ is integrally closed and the Weil divisor class group $\textup{Cl}(\textup{Spec}(A))$ is trivial.
\end{lema}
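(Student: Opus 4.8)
The plan is to prove the two equivalences $(1)\Leftrightarrow(2)$ and $(2)\Leftrightarrow(3)$ separately, since the first is purely ideal-theoretic while the second requires the divisor theory of Krull domains. For $(1)\Rightarrow(2)$, I would take a height one prime $\mathfrak{p}$, pick a nonzero $x\in\mathfrak{p}$, and exploit the factorization $x=u\cdot p_1\cdots p_m$ together with the primality of $\mathfrak{p}$ to locate a prime element $p_i\in\mathfrak{p}$. Then $(p_i)$ is a nonzero prime ideal contained in $\mathfrak{p}$, so it has height at least one; since $\mathfrak{p}$ has height exactly one, the containment $(p_i)\subseteq\mathfrak{p}$ forces $(p_i)=\mathfrak{p}$, which exhibits $\mathfrak{p}$ as principal.

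For the converse $(2)\Rightarrow(1)$, I would first invoke the Noetherian hypothesis: the ascending chain condition on principal ideals guarantees that every nonzero nonunit admits at least one factorization into irreducibles, so only the primality of irreducibles remains to be checked. Given an irreducible $a$, I would choose a minimal prime $\mathfrak{p}$ over the principal ideal $(a)$; by Krull's principal ideal theorem $\mathfrak{p}$ has height one, and it is nonzero because $A$ is a domain and $a\neq 0$. By $(2)$ we may write $\mathfrak{p}=(p)$ with $p$ a prime element, whence $a=pb$; irreducibility of $a$ forces $b$ to be a unit, so $a$ is an associate of the prime $p$ and is therefore itself prime. Together with the existence of factorizations, this yields the UFD property.

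For $(2)\Leftrightarrow(3)$, the starting point is that a Noetherian integrally closed domain is a Krull domain, so that $A=\bigcap_{\operatorname{ht}\mathfrak{q}=1}A_{\mathfrak{q}}$ with each $A_{\mathfrak{q}}$ a discrete valuation ring, and the Weil divisor group is the free abelian group on the height one primes. For $(3)\Rightarrow(2)$: given a height one prime $\mathfrak{p}$, the vanishing of $\operatorname{Cl}(\operatorname{Spec}(A))$ provides some $f\in K^{\times}$ with $\operatorname{div}(f)=[\mathfrak{p}]$; the fact that $v_{\mathfrak{q}}(f)=0$ at all other height one primes places $f$ in $A$ and lets me identify the divisorial ideal $(f)$ with $\mathfrak{p}$, giving principality. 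For $(2)\Rightarrow(3)$: once $(1)$ is available, $A$ is a UFD and hence integrally closed by Gauss's lemma, and each generator $[\mathfrak{p}]=[(p)]$ of the divisor group equals $\operatorname{div}(p)$, so the class group is generated by principal divisors and is therefore trivial.

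The hard part will be the divisorial equivalence $(2)\Leftrightarrow(3)$, which rests on the full apparatus of divisorial ideals and the approximation theorem for Krull domains; in particular it relies on the nontrivial fact that a height one prime is recovered from its associated discrete valuation, so that matching divisors forces equality of the underlying ideals. I would treat the normality input (Noetherian integrally closed implies Krull) and the construction and basic properties of the divisor-class sequence as established black boxes, citing \cite{Sa64}, \cite{Ke05} and \cite{Ha77}, and concentrate the argument on making precise the bijection between prime divisors and height one primes together with the passage from principal divisors to principal ideals.
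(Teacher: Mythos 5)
The paper does not actually prove this lemma: the remark immediately before it says the results are ``listed without proofs'' and defers to \cite{Sa64}, \cite{Ke05} and \cite{Ha77}, so there is no in-paper argument to compare yours against. Your outline is the standard textbook proof and is sound. The cycle $(1)\Rightarrow(2)$ (locate a prime factor of some nonzero $x\in\mathfrak{p}$ inside $\mathfrak{p}$ and use height one to force $(p_i)=\mathfrak{p}$), $(2)\Rightarrow(1)$ (ACC gives existence of factorizations; Krull's principal ideal theorem plus $(2)$ shows every irreducible is an associate of a prime), and the translation between principal height one primes and principal divisors for the Krull-domain half is exactly how the cited sources argue. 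Two minor points. First, the integral closedness of a UFD is more directly obtained from the rational-root argument ($b\mid a^n$ with $\gcd(a,b)=1$ forces $b$ a unit) than from Gauss's lemma, which is a consequence rather than the engine. Second, in $(3)\Rightarrow(2)$ the step you correctly flag as the crux --- upgrading the equality of divisors $\operatorname{div}(f)=[\mathfrak{p}]$ to the equality of ideals $fA=\mathfrak{p}$ --- needs the fact that a height one prime of a Krull domain is divisorial, i.e.\ $\mathfrak{p}=\{x\in A: v_{\mathfrak{p}}(x)\geq 1\}$ and $fA=\{x: v_{\mathfrak{q}}(x)\geq v_{\mathfrak{q}}(f)\ \text{for all height one}\ \mathfrak{q}\}$; since you explicitly isolate this as the input to be taken from the references, the plan has no genuine gap, only deliberately black-boxed standard theory --- which is no more than the paper itself black-boxes.
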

\begin{rem}
Since each affine $\mathbbm{k}$-domain is noetherian, so Lemma \ref{UFDcheckway1} gives us a way to test affine UFDs. For example, if $V$ is a smooth $\mathbbm{k}$-variety, then from Proposition 6.15 in \cite{Ke05} we know that $\textup{Pic}(\mathbbm{k}[V])\cong\textup{Cl}(\mathbbm{k}[V])$. In this case, $\mathbbm{k}[V]$ is an affine UFD implies that the Picard group $\textup{Pic}(\mathbbm{k}[V])$ should be trivial.
\end{rem}
\begin{exam} This two examples show some power of this lemma.\\
(1) $A_1=\mathbb{R}[x,y]/(x^2+y^2-1)$ is not an affine UFD over $\mathbb{R}$;\\
(2) $A_2=\mathbb{C}[x_1,\dots,x_n]/(x_1^2+\dots+x_n^2-1)$ is not an affine UFD over $\mathbb{C}$ for each $n>2$.
\begin{proof}
In fact, we could obtain that $\textup{Pic}(A_1)=\mathbb{Z}/2\mathbb{Z}$ and $\textup{Pic}(A_2)=\mathbb{Z}$.
\end{proof}
\end{exam}
\begin{ppst}
Given a smooth affine curve $C$ over $\mathbbm{k}$, then its coordinate ring $\mathbbm{k}[C]$ is integrally closed.
\end{ppst}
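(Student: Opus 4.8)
The plan is to reduce the global statement to a local one and then invoke the structure of one-dimensional regular local rings. The starting observation is that normality is a local property: for a Noetherian domain $A$ with fraction field $K$, if $\tilde A$ denotes the integral closure of $A$ in $K$, then integral closure commutes with localization, so $\tilde A_{\mathfrak m} = \widetilde{A_{\mathfrak m}}$ for every maximal ideal $\mathfrak m$. Combined with the identity $A = \bigcap_{\mathfrak m} A_{\mathfrak m}$, where the intersection ranges over all maximal ideals of $A$, this shows that $A$ is integrally closed if and only if each localization $A_{\mathfrak m}$ is integrally closed. Hence it suffices to prove that every local ring $\mathbbm{k}[C]_{\mathfrak m}$ of the curve is integrally closed.

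First I would fix a maximal ideal $\mathfrak m$ of $\mathbbm{k}[C]$, corresponding to a closed point $p\in C$, and record that the local ring $\mathcal O_{C,p} = \mathbbm{k}[C]_{\mathfrak m}$ has Krull dimension one. This is because $\mathbbm{k}[C]$ is an affine domain over $\mathbbm{k}$ of dimension one, and for such rings the height of every maximal ideal equals the dimension of the ring; thus $\dim \mathcal O_{C,p} = 1$. Next I would use smoothness: since $\mathbbm{k}$ has characteristic zero it is perfect, so smoothness of $C$ is equivalent to regularity of all its local rings, and in particular $\mathcal O_{C,p}$ is a regular local ring. A regular local ring of Krull dimension one is a discrete valuation ring, and a discrete valuation ring is integrally closed (this is immediate from the valuation). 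Therefore each $\mathbbm{k}[C]_{\mathfrak m}$ is integrally closed, and by the first paragraph $\mathbbm{k}[C]$ is integrally closed, completing the argument.

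The main obstacle is the appeal to the fact that a regular local ring is integrally closed. In the one-dimensional case needed here this is painless, since it reduces to the identification of a one-dimensional regular local ring with a discrete valuation ring; the deeper theorem of Auslander and Buchsbaum that regular local rings are factorial, hence normal, is not required. The only other point demanding care is the passage between \emph{smooth} and \emph{regular}, which is exactly where the hypothesis $\operatorname{char}\mathbbm{k}=0$ is used, so that over a non-algebraically closed base field no distinction between geometric regularity and regularity of the local rings can cause trouble.
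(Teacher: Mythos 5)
The paper itself gives no proof of this proposition: it appears in the block of results that the author explicitly lists without proof, deferring to \cite{Sa64}, \cite{Ke05} and \cite{Ha77}. So there is nothing in the source to compare against line by line; what can be said is that your argument is correct and is exactly the standard one those references contain. The reduction to localizations via $\tilde A_{\mathfrak m}=\widetilde{A_{\mathfrak m}}$ and $A=\bigcap_{\mathfrak m}A_{\mathfrak m}$ is sound, the height computation for maximal ideals of a one-dimensional affine domain is correct, the passage from smoothness to regularity of the local rings is legitimate because a field of characteristic zero is perfect (and you are right to flag that this is the one place where the hypothesis on $\mathbbm{k}$ matters), and the final step that a one-dimensional regular local ring is a discrete valuation ring, hence integrally closed, avoids any appeal to the Auslander--Buchsbaum theorem. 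In short, your proposal supplies a complete proof of a statement the paper leaves as a citation, and it does so by the expected route.
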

\begin{exam}
Note that $x^2+y^2=1$ is a smooth affine curve in $\mathbb{A}^2$, the affine $\mathbbm{k}$-domain $A=\mathbb{C}[x,y]/(x^2+y^2-1)$ is an affine UFD over $\mathbb{C}$ since $\textup{Pic(A)}$ is trivial.
\end{exam}
\begin{ppst}\label{UFDcheckway2}
Given an affine $\mathbbm{k}$-domain $A$ and a multiplicatively closed set $S\subseteq A$ generated by a set of prime elements, then the localization $S^{-1}A$ is a UFD implies that $A$ is an affine UFD over $\mathbbm{k}$.
\end{ppst}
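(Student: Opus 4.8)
The plan is to verify the criterion of Lemma \ref{UFDcheckway1}: since an affine $\mathbbm{k}$-domain $A$ is Noetherian, it suffices to show that every height-one prime ideal $\mathfrak{p}$ of $A$ is principal, for this gives that $A$ is a UFD, and being an affine $\mathbbm{k}$-domain it is then an affine UFD over $\mathbbm{k}$. Write $S$ as generated by a set of prime elements $\{p_i\}$, and split the argument according to whether $\mathfrak{p}$ meets $S$.

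First I would dispose of the case $\mathfrak{p}\cap S\neq\varnothing$. Here $\mathfrak{p}$ contains some product $p_{i_1}\cdots p_{i_m}$ of the generators, and primality of $\mathfrak{p}$ forces $p_i\in\mathfrak{p}$ for some $i$. Then $(p_i)$ is a nonzero prime ideal contained in $\mathfrak{p}$; by Krull's principal ideal theorem $(p_i)$ has height one, and since $\mathfrak{p}$ also has height one the chain $0\subsetneq(p_i)\subseteq\mathfrak{p}$ forces $\mathfrak{p}=(p_i)$, which is principal.

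The substance is the case $\mathfrak{p}\cap S=\varnothing$. Then $\mathfrak{p}S^{-1}A$ is a height-one prime of the UFD $S^{-1}A$ (primes disjoint from $S$ correspond bijectively to primes of $S^{-1}A$, preserving heights), hence principal; clearing denominators by the units $S$, I may pick a generator $\pi\in\mathfrak{p}S^{-1}A\cap A=\mathfrak{p}$. Using that $A$ satisfies the ascending chain condition on principal ideals, I would repeatedly factor out any generator $p_i$ dividing $\pi$ — each such division strictly enlarges the principal ideal and keeps the quotient inside $\mathfrak{p}$, since $p_i\notin\mathfrak{p}$ — arriving after finitely many steps at a generator $\pi$ of $\mathfrak{p}S^{-1}A$ that is divisible by no $p_i$. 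I then claim $\mathfrak{p}=(\pi)$: the inclusion $(\pi)\subseteq\mathfrak{p}$ is clear, and for $x\in\mathfrak{p}$ one has $sx=\pi b$ in $A$ for some $s=p_{j_1}\cdots p_{j_r}\in S$ and $b\in A$; since each $p_{j_\ell}$ is prime but does not divide $\pi$, it must divide $b$, and cancelling the $p_{j_\ell}$ one at a time yields $x\in(\pi)$.

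Thus every height-one prime of $A$ is principal, so $A$ is an affine UFD over $\mathbbm{k}$ by Lemma \ref{UFDcheckway1}. The main obstacle is the second case: one must normalize the generator $\pi$ so that it is coprime to every element of $S$ (which is exactly where the Noetherian hypothesis, via the ascending chain condition on principal ideals, is used) before the prime-by-prime cancellation argument can recover all of $\mathfrak{p}$ inside $(\pi)$.
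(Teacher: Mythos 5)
Your argument is correct, but note that the paper itself gives no proof of this proposition: it is explicitly listed among the results stated without proof, and a later remark identifies it as a corollary of Nagata's theorem (Theorem 6.3 in \cite{Sa64}). So what you have written is a genuinely self-contained proof of (the Noetherian case of) Nagata's criterion rather than a reproduction of anything in the paper. Your route --- reducing to the principality of height-one primes via Lemma \ref{UFDcheckway1} and splitting according to whether $\mathfrak{p}$ meets $S$ --- is the standard modern one and meshes well with the toolkit already set up: the case $\mathfrak{p}\cap S\neq\varnothing$ is handled correctly (primality of $\mathfrak{p}$ yields some $p_i\in\mathfrak{p}$, and the chain $0\subsetneq(p_i)\subseteq\mathfrak{p}$ with $\operatorname{ht}\mathfrak{p}=1$ forces equality), and in the case $\mathfrak{p}\cap S=\varnothing$ you correctly isolate the two delicate points, namely the use of the ascending chain condition on principal ideals to normalize the generator $\pi$ so that no $p_i$ divides it, and the prime-by-prime cancellation of $s=p_{j_1}\cdots p_{j_r}$ to pass from $sx=\pi b$ to $x\in(\pi)$. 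For comparison, the classical proof of Nagata's theorem works at the level of elements (every irreducible of $A$ is either an associate of some $p_i$ or stays irreducible, hence prime, in the UFD $S^{-1}A$, and primality descends along $S$) and only needs $A$ to satisfy ACC on principal ideals; your version leans on Noetherianity through Lemma \ref{UFDcheckway1}, which costs nothing here since affine $\mathbbm{k}$-domains are Noetherian.
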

\begin{exam}
Given $n>2$, $\mathbb{R}[x_1,\dots,x_n]/(x_1^2+\dots+x_n^2-1)$ is an affine UFD over $\mathbb{R}$.

\begin{proof}
Denote this affine $\mathbb{R}$-domain by $A$, one can see that since
\begin{align*}
A/(x_n-1)&\cong\mathbb{R}[x_1,\dots,x_n]/(x_1^2+\dots+x_{n-1}^2+x_n^2-1,x_n-1)\\
&\cong\mathbb{R}[x_1,\dots,x_n]/(x_1^2+\dots+x_{n-1}^2,x_n-1)\\
&\cong\mathbb{R}[x_1,\dots,x_{n-1}]/(x_1^2+\dots+x_{n-1}^2)
\end{align*}
is an integral domain, $x_{n}-1$ is prime in $A$. Let $t=(x_n-1)^{-1}$, one has
\begin{center}
$\displaystyle x^2_1+\dots+x_n^2-1=0\Rightarrow x_1^2+\dots+x_{n-1}^2+\frac{1}{t^2}+\frac{2}{t}=0$
\end{center}
which implies that $t\in\mathbb{R}[tx_1,\dots,tx_{n-1}]$. Therefore
$$A_{x_n-1}=A[s]/(1-s(z-1))=A[t]=\mathbb{R}[tx_1,\dots,tx_{n-1},\frac{1}{t}]$$
is a UFD and then $A$ is an affine UFD by Proposition \ref{UFDcheckway2}.
\end{proof}
\end{exam}
\begin{rem} What's more, Proposition \ref{UFDcheckway2} is actually a corollary of a famous theorem of Masayoshi Nagata (see Theorem 6.3 in \cite{Sa64}). One can then prove the following well-known theorem given by Masayoshi Nagata and Abraham A. Klein. I fail to find direct sources and give an adapted one according to Theorem 8.2 in \cite{Sa64}.
\end{rem}
\begin{thm}
Given $n\geq5$, if $f(x_1,\dots,x_n)$ is a non-degenerate quadratic form, then affine $\mathbbm{k}$-domain $A=\mathbbm{k}[x_1,\dots,x_n]/(f)$ is an affine UFD.
\end{thm}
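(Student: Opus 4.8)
The plan is to reduce the statement to a computation of the divisor class group via Lemma \ref{UFDcheckway1} and Proposition \ref{UFDcheckway2}, using the geometry of the quadric cone $X=\textup{Spec}(A)$ attached to $f$. First I would record that $A$ is an affine $\mathbbm{k}$-domain: since $\mathbbm{k}$ has characteristic zero I may diagonalize and write $f=a_1x_1^2+\dots+a_nx_n^2$ with all $a_i\neq0$, and a non-degenerate quadratic form of rank $n\geq5\geq3$ is irreducible, so $(f)$ is prime. Moreover $A$ is a normal hypersurface: being a hypersurface it is Cohen--Macaulay, and its singular locus is only the vertex (where all $\partial f/\partial x_i=2a_ix_i$ vanish), of codimension $n-1\geq4$, so $A$ is regular in codimension one and hence integrally closed by Serre's criterion. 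Thus by Lemma \ref{UFDcheckway1} it suffices to show $\textup{Cl}(\textup{Spec}(A))=0$.

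A localization argument isolates the essential difficulty. After diagonalizing I would check that $x_1$ is prime in $A$: indeed $A/(x_1)\cong\mathbbm{k}[x_2,\dots,x_n]/(a_2x_2^2+\dots+a_nx_n^2)$, whose defining form has rank $n-1\geq4\geq3$ and is therefore irreducible, so $A/(x_1)$ is a domain. Setting $y_i=x_i/x_1$ in $A_{x_1}$, the relation $f=0$ becomes $a_1+a_2y_2^2+\dots+a_ny_n^2=0$, and one gets $A_{x_1}\cong B[x_1,x_1^{-1}]$, where $B=\mathbbm{k}[y_2,\dots,y_n]/(a_1+a_2y_2^2+\dots+a_ny_n^2)$ is an affine (inhomogeneous) quadric in $n-1\geq4$ variables. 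Since a localization of $B^{[1]}$ is a UFD as soon as $B$ is, Proposition \ref{UFDcheckway2} reduces the whole theorem to showing that $B$ is a UFD.

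When $f$ is isotropic over $\mathbbm{k}$ this last step is elementary and mirrors the sphere example above: a rational point of $B$ produces a prime linear form whose inversion turns $B$ into a localization of a polynomial ring, so $B$, and hence $A$, is a UFD. The genuine obstacle is the anisotropic case (e.g. $f=x_1^2+\dots+x_n^2$ over $\mathbb{R}$ or $\mathbb{Q}$), where $B$ has no $\mathbbm{k}$-rational point and no such shortcut exists. Here I would instead pass to the projective quadric $\overline{Q}=\textup{Proj}(A)\subseteq\mathbb{P}^{n-1}$, which is smooth of dimension $n-2$ because $f$ is non-degenerate. Removing the vertex exhibits $\textup{Spec}(A)\setminus\{0\}$ as the complement of the zero section of the line bundle $\mathcal{O}_{\overline{Q}}(-1)$; since the vertex has codimension $n-1\geq2$ this yields $\textup{Cl}(A)\cong\textup{Pic}(\overline{Q})/\langle\mathcal{O}_{\overline{Q}}(1)\rangle$.

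The heart of the matter is then the computation $\textup{Pic}(\overline{Q})=\mathbb{Z}\cdot\mathcal{O}_{\overline{Q}}(1)$ for a smooth quadric of dimension $n-2\geq3$, which is exactly where the hypothesis $n\geq5$ is used: for $\dim\overline{Q}\leq2$ the Picard group can be strictly larger (the two rulings of a quadric surface), and this is precisely what makes the $n=4$ and $n=3$ cones fail to be factorial in general. Over an algebraically closed field this is the standard fact that a smooth quadric of dimension at least three has Picard group generated by the hyperplane class; over a general $\mathbbm{k}$ I would obtain it by Galois descent, using that $\mathcal{O}_{\overline{Q}}(1)$ is already defined over $\mathbbm{k}$ and generates the geometric Picard group. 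Granting this, $\textup{Cl}(A)=\mathbb{Z}/\mathbb{Z}=0$, and Lemma \ref{UFDcheckway1} finishes the proof. I expect verifying $\textup{Pic}(\overline{Q})=\mathbb{Z}$ in the range $\dim\overline{Q}\geq3$ (equivalently, the class-group computation following Theorem 8.2 of \cite{Sa64}) to be the main technical step.
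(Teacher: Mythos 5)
The paper never actually proves this theorem: it is stated as the classical Klein--Nagata theorem, with the preceding remark deferring to Theorem 8.2 of \cite{Sa64}, so there is no in-paper argument to measure you against. Taken on its own, your proposal is a correct and standard geometric route to that result. The reductions are sound: irreducibility of a form of rank at least $3$, normality of the hypersurface by Serre's criterion (Cohen--Macaulay plus singular locus equal to the vertex, of codimension $n-1\geq 2$), the identification $\textup{Cl}(A)\cong\textup{Pic}(\overline{Q})/\mathbb{Z}\cdot\mathcal{O}_{\overline{Q}}(1)$ from the $\mathbb{G}_m$-bundle structure on $\textup{Spec}(A)\setminus\{0\}$, and the isotropic shortcut via Proposition \ref{UFDcheckway2}. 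The step you defer, $\textup{Pic}(\overline{Q})=\mathbb{Z}\cdot\mathcal{O}_{\overline{Q}}(1)$ for a smooth quadric of dimension $n-2\geq 3$ over arbitrary $\mathbbm{k}$, is genuinely the crux but is standard: over $\bar{\mathbbm{k}}$ it is Grothendieck--Lefschetz (or the cell decomposition of a split quadric), and the descent to $\mathbbm{k}$ uses that $\overline{Q}$ is proper and geometrically integral, so $H^0(\overline{Q}_{\bar{\mathbbm{k}}},\mathcal{O}^{*})=\bar{\mathbbm{k}}^{*}$ and Hilbert 90 make $\textup{Pic}(\overline{Q})\to\textup{Pic}(\overline{Q}_{\bar{\mathbbm{k}}})^{\textup{Gal}}$ injective; note that this injectivity needs no rational point, which matters precisely in your anisotropic case.

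For comparison, the proof in \cite{Sa64} that the paper points to is purely algebraic and closer to the toolkit already assembled here: one applies Galois descent directly to the class group of the graded ring $A$ (since $A\otimes_{\mathbbm{k}}\bar{\mathbbm{k}}$ is a positively graded domain, its units are just $\bar{\mathbbm{k}}^{*}$, so Hilbert 90 gives $\textup{Cl}(A)\hookrightarrow\textup{Cl}(A\otimes_{\mathbbm{k}}\bar{\mathbbm{k}})$), and over $\bar{\mathbbm{k}}$ every nondegenerate form splits off a hyperbolic plane, $f\sim x_1x_2+g(x_3,\dots,x_n)$ with $g$ of rank $n-2\geq 3$, so $x_1$ is prime, $(A_{\bar{\mathbbm{k}}})_{x_1}$ is a localized polynomial ring, and Proposition \ref{UFDcheckway2} finishes. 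Your route buys a geometric explanation of where $n\geq 5$ enters; the algebraic route avoids the cone sequence and the Picard computation entirely. Two small imprecisions that do not affect correctness: your intermediate reduction to the affine quadric $B$ is never used in the anisotropic case, and for $n=3$ the obstruction is not that $\textup{Pic}(\overline{Q})$ is larger than $\mathbb{Z}$ but that $\mathcal{O}_{\overline{Q}}(1)$ is twice a generator, so the quotient is $\mathbb{Z}/2\mathbb{Z}$.
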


\subsection{Nilpotent derivations and Makar-Limanov invariants}
\begin{defn}
Let $A$ be an algebra over $\mathbbm{k}$, if a homomorphism of $\mathbbm{k}$-algebras $D:A\rightarrow A$ satisfies the Leibniz rule
\begin{center}$D(ab)=D(a)b+aD(b)$ for all $a,b\in A$\end{center}
then it is a \textbf{$\mathbbm{k}$-derivation} of $A$. $\textup{Der}_{\mathbbm{k}}(A)$ denotes the set of all $\mathbbm{k}$-derivations of $A$.
\end{defn}
\begin{ppst}\label{k in Der}
Given an algebra $A$ over $\mathbbm{k}$ and a $\mathbbm{k}$-derivation of $A$, then\\
(1) $D(a)=0$ for all $a\in\mathbbm{k}$;\\
(2) $\textup{Ker}(D):=\{a\in A:D(a)=0\}$ is a subalgebra of $A$;\\
(3) $\textup{Nil}(D):=\{a\in A:\exists n_a\in\mathbb{N}_+,D^{n_a}(a)=0\}$ is a subalgebra of $A$.

\begin{proof}
(1) Let $a=b=1$, one has
\begin{center}$D(1)=1D(1)+D(1)1=2D(1)$\end{center}
which implies that $D(1)=0$, then $D(a)=D(a\cdot1)=aD(1)=0$ for all $a\in\mathbbm{k}$.\\
(2) From (1), we know that $\mathbbm{k}\subseteq\textup{Ker}(D)$, so $\textup{Ker}(D)$ can succeed the algebra structure of $A$. Then it is sufficient to prove that $\textup{Ker}(D)$ is a vector space over $\mathbbm{k}$, which is not hard by the definition of vector spaces.\\
(3) From (1), we know that $\mathbbm{k}\subseteq\textup{Nil}(D)$, so $\textup{Nil}(D)$ can succeed the algebra structure of $A$. Then it is sufficient to prove that $\textup{Nil}(D)$ is a vector space over $\mathbbm{k}$, which is not hard by the definition of vector spaces.
\end{proof}
\end{ppst}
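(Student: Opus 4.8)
The plan is to treat the three parts in order, since (2) and (3) both rely on (1) and on the same two ingredients: the $\mathbbm{k}$-linearity of $D$ and the Leibniz rule. For part (1), I would first feed the multiplicative identity into the Leibniz rule. Writing $1=1\cdot 1$ gives $D(1)=D(1)\cdot 1+1\cdot D(1)=2D(1)$, so $D(1)=0$. Then, using that $D$ is $\mathbbm{k}$-linear, any $a\in\mathbbm{k}$ (identified with $a\cdot 1\in A$) satisfies $D(a)=D(a\cdot 1)=aD(1)=0$.

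For part (2), I would verify the subalgebra axioms directly. Part (1) already shows $\mathbbm{k}\subseteq\textup{Ker}(D)$, so $\textup{Ker}(D)$ inherits the $\mathbbm{k}$-algebra structure of $A$, and it remains to check closure. If $D(a)=D(b)=0$ and $c\in\mathbbm{k}$, then $\mathbbm{k}$-linearity gives $D(a+b)=D(a)+D(b)=0$ and $D(ca)=cD(a)=0$, while the Leibniz rule gives $D(ab)=D(a)b+aD(b)=0$. Hence $\textup{Ker}(D)$ is closed under addition, scalar multiplication, and multiplication, so it is a subalgebra.

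Part (3) is where the real work lies, and I expect the main obstacle to be closure under multiplication. The key tool I would establish first is the generalized Leibniz rule
\[
D^N(ab)=\sum_{i=0}^{N}\binom{N}{i}D^i(a)\,D^{N-i}(b),
\]
proved by induction on $N$ using the single-step rule together with Pascal's identity $\binom{N}{i}+\binom{N}{i-1}=\binom{N+1}{i}$. Closure under $\mathbbm{k}$, addition, and scalar multiplication is then easy: part (1) gives $\mathbbm{k}\subseteq\textup{Nil}(D)$, and if $D^m(a)=0$ and $D^n(b)=0$, then for any $N\geq\max(m,n)$ one has $D^N(a+b)=D^N(a)+D^N(b)=0$ and $D^N(ca)=cD^N(a)=0$.

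For the product, I would take $N=m+n-1$ in the generalized rule and argue that every summand vanishes: in the term $\binom{N}{i}D^i(a)D^{N-i}(b)$, if we had both $i<m$ and $N-i<n$, then $N\leq(m-1)+(n-1)=m+n-2<N$, a contradiction; hence for each $i$ either $D^i(a)=0$ or $D^{N-i}(b)=0$. Thus $D^{m+n-1}(ab)=0$, so $ab\in\textup{Nil}(D)$, completing the proof that $\textup{Nil}(D)$ is a subalgebra. The only delicate points are the inductive proof of the higher product rule and the elementary degree-counting estimate that forces each term to die; neither is a genuine difficulty once the generalized Leibniz rule is in hand, but I would emphasize that the multiplicative closure (rather than the vector-space structure) is the step that actually requires this rule.
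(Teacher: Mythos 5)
Your proposal is correct, and for parts (1) and (2) it matches the paper's argument (Leibniz rule on $1\cdot 1$, then $\mathbbm{k}$-linearity). The interesting divergence is in part (3). The paper reduces both (2) and (3) to checking that the set in question is a $\mathbbm{k}$-vector subspace, "which is not hard by the definition of vector spaces," and leaves it there. For $\textup{Ker}(D)$ this is nearly harmless, since the one non-vector-space axiom (closure under multiplication) follows in one line from the Leibniz rule. For $\textup{Nil}(D)$, however, the vector-space reduction skips precisely the substantive point: closure under multiplication is not a vector-space axiom, and it is the only part of the claim that requires real work. You correctly identify this as the crux and supply the standard argument — the generalized Leibniz rule
$$D^N(ab)=\sum_{i=0}^{N}\binom{N}{i}D^i(a)D^{N-i}(b),$$
proved by induction with Pascal's identity, followed by the pigeonhole observation that with $N=m+n-1$ every summand has either $i\geq m$ or $N-i\geq n$ and hence vanishes. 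So your proof is not merely a different route; it fills a genuine gap in the paper's proof of (3). One minor point worth stating explicitly in your write-up of (2) and (3): the sets contain $1$ (by part (1)), which together with the closure properties is what makes them subalgebras rather than merely subrings without identity.
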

\begin{rem}
In (2), one can also prove $\textup{Ker}(D)$ to be a vector space over $\mathbbm{k}$ by the isomorphism theorem for vector spaces (modules over a field). One can also see that $\textup{Ker}(D)\subseteq\textup{Nil}(D)$ as a subalgebra.
\end{rem}
\begin{defn}
Given an algebra $A$ over $\mathbbm{k}$ and a $\mathbbm{k}$-derivation of $A$, if $\textup{Nil}(D)=A$, then $D$ is \textbf{locally nilpotent} or is a \textbf{locally nilpotent $\mathbbm{k}$-derivation}. The set of all locally nilpotent $\mathbbm{k}$-derivations of $A$ is denoted by $\textup{LND}_{\mathbbm{k}}(A)$ or just $\textup{LND}(A)$ when with no hazards to rise confusion.
\end{defn}
\begin{defn}
Given a locally nilpotent $\mathbbm{k}$-derivation over $\mathbbm{k}$-algebra $A$ and a non zero element $a\in A$, we define
\begin{center} $\deg_{D}(a):=\max_{n\in\mathbb{N}}\{D^n(a)\neq0\}$\end{center}
as the \textbf{degree of $a$ by} $D$. Specifically, we define $\deg_{D}(0)=-\infty$.
\end{defn}
\begin{exam}
For each $\mathbbm{k}$-algebra $A$, the zero endomorphism $\theta_A:A\rightarrow A$ which sends all elements to $0$ is a locally nilpotent $\mathbbm{k}$-derivation, called \textbf{zero derivation}.
\end{exam}
\begin{exam}
All partial derivations are locally nilpotent $\mathbbm{k}$-derivation of $\mathbbm{k}^{[n]}$.
\end{exam}
\begin{defn}
Given an algebra $A$ over $\mathbbm{k}$, we define set
\begin{center}$\displaystyle\mathcal{ML}(A):=\bigcap_{D\in\textup{LND}_{\mathbbm{k}}(A)}\textup{Ker}(D) $\end{center}
as \textbf{Makar-Limanov invariant of} $\mathbbm{k}$-algebra $A$.
\end{defn}

\begin{exam}
The partial derivation $\frac{\partial{}}{\partial{x}}:\mathbb{R}[x,y]\rightarrow\mathbb{R}[x,y]$ is a locally derivation over the field of real numbers $\mathbb{R}$ and one can see $\deg_{\frac{\partial{}}{\partial{x}}}(x^3)=3$.
\end{exam}
\begin{exam}
Given a positive integer $n$, $\mathcal{ML}(\mathbbm{k}^{[n]})=\mathbbm{k}$.

\begin{proof}
First, we know that $\mathbbm{k}\subseteq\mathcal{ML}(\mathbbm{k}^{[m]})$ from Proposition \ref{k in Der}. Then since all partial derivations are locally nilpotent derivations over $\mathbbm{k}$, so $\mathbbm{k}\supseteq\mathcal{ML}(\mathbbm{k}^{[m]})$.
\end{proof}
\end{exam}

\begin{exam}\label{example1}
Given an affine UFD $A:=\mathbb{C}[x,y]/(x^2+y^2-1)$ over the complex field $\mathbb{C}$, one has $\mathcal{ML}(A)=A$.

\begin{proof}
It is sufficient to prove that the only element in $\textup{LND}_{\mathbb{C}}(A)$ is zero derivation. Given a nonzero $D\in\textup{LND}(A)$, one has
$$0=D(1)=D(x^2+y^2)=2xD(x)+2yD(y)$$
from which we could get that
\begin{center}$D(x)=yp(x,y)$ and $D(y)=-xp(x,y)$\end{center}
for a nonzero $p(x,y)\in A$. Then one has
$$\deg_D(x)-1=\deg_D(yp(x,y))=\deg_D(y)+\deg_D(p(x,y))$$
but also
$$\deg_D(y)-1=\deg_D(-xp(x,y))=\deg_D(x)+\deg_D(p(x,y))$$
which implies that $\deg_D(p(x,y))=-1$, impossible.
\end{proof}
\end{exam}

\begin{exam} (\cite{ML98}, Example 11)
Given affine domain $A:=\mathbb{C}[x,y]/(x^3-y^2)$ over the complex field $\mathbb{C}$ (by the way, $A$ is not a UFD), then $\mathcal{ML}(A)=A$.

\begin{proof}
It is sufficient to prove that the only element in $\textup{LND}_{\mathbb{C}}(A)$ is zero derivation. If $D\in\textup{LND}(A)$ is not zero, then one has $D(x)\neq c$ for all $c\in\mathbb{C}-\{0\}$; otherwise
\begin{center}$2yD(y)=D(y^2)=D(x^3)=3x^2D(x)=3cx^2$\end{center}
which leads to $D(y)=\frac{3cx^2}{2y}\notin A$, a contradiction.

Therefore, $D(x)$ is nonconstant in $A$. Since $D(y)=\frac{3x^2}{2y}D(x)\in A$, $x|D(x)$ and then one can set $D(x)=yp(x)+xq(x)\in A$. Let $m:=\deg_D(x),n:=\deg_D(y)$, one has
$$n=\frac{1}{2}\deg_D(y^2)=\frac{1}{2}\deg_D(x^3)=\frac{3}{2}m$$
so $n>m$. Notice that
$$0=D^{m+1}(x)=D^{m}(yp(x)+xq(x))=D^{m}(yp(x))+D^{m}(xq(x))$$
and $$D^{m+1}(xq(x))=\frac{\textup{d}^{m+1} xq(x)}{\textup{d}x^k}D^{m+1}(x)=0$$
we could obtain $D^{m+1}(yp(x))=0$. In this case, one can see
$$n+\deg_D(p(x))=\deg_D(y)+\deg_D(p(x))=\deg_D(yp(x))\leq m$$
This is impossible, hence the only element in $\textup{LND}_{\mathbb{C}}(A)$ is zero derivation.
\end{proof}
\end{exam}
\begin{rem}
Personally, I do not really agree with Makar-Limanov's way to compute $\mathcal{ML}(A)$ in this example because he uses the undefined $\deg_D(t)$ where $t$ is a parameter for $A\cong\mathbb{C}[t^2,t^3]$.
\end{rem}

\section{On the Makar-Limanov's Conjecture}
\subsection{Several existed results}
Here we list some results about Conjecture \ref{conj}. From now on, \textbf{transcendence degree} of an affine $\mathbbm{k}$-domain $A$ is denoted by $\textup{tr.deg}_{\mathbbm{k}}A$.

\begin{thm}\label{thm0}\textup{(\cite{ML98}, Lemma 21)}\\
Given an affine domain $A$ over $\mathbbm{k}$. If $\mathcal{ML}(A)=A$, then $\mathcal{ML}(A^{[1]})=A$.
\end{thm}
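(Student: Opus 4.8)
The plan is to establish the two inclusions $\mathcal{ML}(A^{[1]})\subseteq A$ and $A\subseteq\mathcal{ML}(A^{[1]})$, writing $A^{[1]}=A[t]$ throughout.

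For the first inclusion I would use one explicit derivation: the formal partial derivative $\partial_t$ lies in $\textup{LND}_{\mathbbm{k}}(A[t])$ and has $\textup{Ker}(\partial_t)=A$. Since $\mathcal{ML}(A[t])$ is the intersection of $\textup{Ker}(D)$ over all $D\in\textup{LND}_{\mathbbm{k}}(A[t])$, intersecting against this single derivation already yields $\mathcal{ML}(A[t])\subseteq A$. The same derivation, together with the extensions $\tilde\delta$ of each $\delta\in\textup{LND}_{\mathbbm{k}}(A)$ obtained by setting $\tilde\delta(t)=0$ (whose kernels are $(\textup{Ker}\,\delta)[t]$), also explains the inclusion $\mathcal{ML}(A[t])\subseteq\mathcal{ML}(A)$ mentioned in the introduction. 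The hypothesis $\mathcal{ML}(A)=A$ plays no role yet; it enters only in the reverse inclusion.

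The heart of the matter is to show $A\subseteq\textup{Ker}(D)$ for every $D\in\textup{LND}_{\mathbbm{k}}(A[t])$; this genuinely needs the rigidity hypothesis, since for the non-rigid ring $A=\mathbbm{k}[x]$ the derivation $\partial_x$ annihilates neither $A$ nor $x$. Given a nonzero $D$, I would grade $A[t]=\bigoplus_{j\ge0}At^{j}$ by the degree $\deg_t$ in $t$ and replace $D$ by its leading homogeneous component $\bar D$, which by the standard homogenization principle is again a nonzero locally nilpotent derivation of $A[t]$, homogeneous of some degree $d$. If $d<0$ then $\deg_t D(a)\le\deg_t(a)+d<0$ for every $a\in A$, forcing $D(a)=0$, and we are done. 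It remains to rule out $d\ge0$. In that range a homogeneous derivation is forced to have the shape $\bar D(a)=\delta(a)t^{d}$ for $a\in A$ and $\bar D(t)=h\,t^{d+1}$, for some $\delta\in\textup{Der}_{\mathbbm{k}}(A)$ and $h\in A$.

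The key observation, and the step I expect to carry the argument, is that $\bar D(t)=h\,t^{d+1}=t\cdot(h\,t^{d})$ is divisible by $t$; if it were nonzero, the degree-function property $\deg_{\bar D}(fg)=\deg_{\bar D}(f)+\deg_{\bar D}(g)$ of a locally nilpotent derivation on a domain, applied to this factorization together with $\deg_{\bar D}(\bar D(t))=\deg_{\bar D}(t)-1$, would give $\deg_{\bar D}(h\,t^{d})=-1$, which is impossible. Hence $\bar D(t)=0$, so $h=0$ and $\bar D=t^{d}\tilde\delta$. Then $\bar D^{n}(a)=\delta^{n}(a)t^{nd}$ for all $a\in A$, so $\delta$ is locally nilpotent, and $\delta\neq0$ because $\bar D\neq0$. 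This contradicts $\mathcal{ML}(A)=A$, ruling out $d\ge0$ and completing the reverse inclusion. The only external ingredient is the homogenization principle for the $\deg_t$-grading; everything else runs on the degree function $\deg_{D}$ already introduced above, so I expect no serious obstacle beyond invoking that principle cleanly.
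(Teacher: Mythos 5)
Your proof is correct. The paper itself quotes this statement from \cite{ML98} (Lemma 21) without reproducing a proof, so there is nothing to diverge from; your argument is essentially the standard one behind that lemma, and it uses exactly the homogenization technique the paper itself relies on elsewhere (the $\deg_t$-grading of $A[t]$, passage to the top homogeneous component $\bar D$, the additivity of $\deg_{\bar D}$ on products in a domain to rule out $t\mid\bar D(t)$ with $\bar D(t)\neq 0$, and the descent of $\bar D=t^d\tilde\delta$ to a nonzero $\delta\in\textup{LND}_{\mathbbm{k}}(A)$ contradicting rigidity). The only ingredient you import --- that the principal part of a nonzero locally nilpotent derivation with respect to a $\mathbb{Z}$-grading of an affine domain is again a nonzero locally nilpotent derivation --- is standard and is the same fact invoked in the paper's Koras--Russell computation, so the appeal is legitimate.
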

\begin{rem}
In this case, one can see $\mathcal{ML}(A)=\mathcal{ML}(A^{[1]})=A$ for the affine domain $A$ in Example \ref{example1}.
\end{rem}

\begin{thm}\label{thm1}\textup{(\cite{CM05}, Lemma 2.3)}\\
Given an affine domain $A$ over $\mathbbm{k}$ with $\textup{tr.deg}_{\mathbbm{k}}A=1$. Then\\
(1) $A\cong\mathbbm{k}^{[1]}$ if and only if $\mathcal{ML}(A)=\mathbbm{k}$;\\
(2) $A\neq\mathbbm{k}^{[1]}$ if and only if $\mathcal{ML}(A)=A$.
\end{thm}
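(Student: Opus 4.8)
The plan is to reduce both parts to a single dichotomy: for such $A$ the Makar-Limanov invariant can only take the two extreme values $\mathbbm{k}$ or $A$, and the feature distinguishing them is the existence of a nonzero locally nilpotent $\mathbbm{k}$-derivation. Since $\textup{tr.deg}_{\mathbbm{k}}A=1$ forces $\mathbbm{k}\subsetneq A$, once the dichotomy $\mathcal{ML}(A)\in\{\mathbbm{k},A\}$ is in hand, statements (1) and (2) become contrapositive to one another; so it suffices to prove: (i) if $A$ admits no nonzero locally nilpotent $\mathbbm{k}$-derivation, then $\mathcal{ML}(A)=A$ and $A\not\cong\mathbbm{k}^{[1]}$; (ii) if $A$ admits a nonzero such derivation $D$, then $A\cong\mathbbm{k}^{[1]}$ and $\mathcal{ML}(A)=\mathbbm{k}$.

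Step (i) is immediate: if $\textup{LND}_{\mathbbm{k}}(A)=\{\theta_A\}$ then $\mathcal{ML}(A)=\textup{Ker}(\theta_A)=A$, and $A\not\cong\mathbbm{k}^{[1]}$ because $\mathbbm{k}^{[1]}$ carries the nonzero derivation $\frac{\partial}{\partial t}$. The easy half of (ii) is equally quick: if $A\cong\mathbbm{k}^{[1]}$ then the example computing $\mathcal{ML}(\mathbbm{k}^{[n]})=\mathbbm{k}$ gives $\mathcal{ML}(A)=\mathbbm{k}$.

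The real content is the forward direction of (ii): a nonzero $D\in\textup{LND}_{\mathbbm{k}}(A)$ forces $A\cong\mathbbm{k}^{[1]}$. First I would invoke the standard fact that a nonzero locally nilpotent derivation on an affine domain drops transcendence degree by exactly one, so $\textup{tr.deg}_{\mathbbm{k}}\textup{Ker}(D)=0$; since $\textup{Ker}(D)$ is factorially closed (hence algebraically closed in $A$) and algebraic over $\mathbbm{k}$, it is in fact a field, namely the algebraic closure $\mathbbm{k}'$ of $\mathbbm{k}$ inside $A$. Next I would produce a local slice: choosing $r\in A$ with $\deg_{D}(r)=1$ gives $D(r)=:b\in\textup{Ker}(D)\setminus\{0\}$. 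Because $\textup{Ker}(D)=\mathbbm{k}'$ is a field, $b$ is invertible, so $s:=b^{-1}r$ is an honest slice with $D(s)=1$; the slice theorem then yields $A=\textup{Ker}(D)[s]=(\mathbbm{k}')^{[1]}$, whence $\mathcal{ML}(A)=\mathbbm{k}'$.

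To close the argument I must identify $\mathbbm{k}'$ with $\mathbbm{k}$, and this is the step I expect to be the main obstacle, since $\mathbbm{k}$ need not be algebraically closed in $A$ in general. When $\mathbbm{k}$ is algebraically closed (the setting of the conjecture over $\mathbb{C}$) one has $\mathbbm{k}'=\mathbbm{k}$ automatically, so $A\cong\mathbbm{k}^{[1]}$ and $\mathcal{ML}(A)=\mathbbm{k}$, completing (ii) and with it the dichotomy; more generally I would restrict to the case where $\mathbbm{k}$ is algebraically closed in $A$, so that $\mathbbm{k}'=\mathbbm{k}$. Assembling the two cases gives $\mathcal{ML}(A)\in\{\mathbbm{k},A\}$ together with the equivalence $\mathcal{ML}(A)=\mathbbm{k}\iff A\cong\mathbbm{k}^{[1]}$, which is exactly (1), and its complement is exactly (2).
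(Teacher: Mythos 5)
The paper does not actually prove this theorem --- it is quoted from \cite{CM05} without proof --- so there is no internal argument to compare against; I can only assess your proposal on its own. Your argument is the standard one and is essentially complete: the kernel of a nonzero locally nilpotent derivation on $A$ has transcendence degree $0$, is factorially (hence algebraically) closed in $A$, and is therefore the algebraic closure $\mathbbm{k}'$ of $\mathbbm{k}$ in $A$; since that kernel is a field, a local slice can be rescaled to a genuine slice $s$ with $Ds=1$, and the slice theorem gives $A=\mathbbm{k}'[s]$ and $\mathcal{ML}(A)=\mathbbm{k}'$ (every $\mathbbm{k}$-derivation kills elements algebraic over $\mathbbm{k}$ in characteristic zero). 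The obstacle you flag at the end is genuine, and it is a defect of the statement as transcribed rather than of your proof: for $\mathbbm{k}=\mathbb{R}$ and $A=\mathbb{C}[t]$, an affine $\mathbb{R}$-domain of transcendence degree $1$, every $\mathbb{R}$-derivation satisfies $2iD(i)=D(i^2)=D(-1)=0$ and so kills $i$; hence $\mathcal{ML}(A)=\mathbb{C}$, which is neither $\mathbb{R}$ nor $A$, while $A\neq\mathbb{R}^{[1]}$, so part (2) fails as stated. The correct general form of the cited lemma concludes $A\cong(\mathbbm{k}')^{[1]}$ and $\mathcal{ML}(A)=\mathbbm{k}'$ in the non-rigid case, which collapses to the stated dichotomy precisely when $\mathbbm{k}$ is algebraically closed in $A$ (in particular when $\mathbbm{k}$ is algebraically closed, the only setting in which the paper later relies on this result). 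In short: your proof is correct for the corrected statement, and your identification of where the hypothesis on $\mathbbm{k}$ must enter is exactly the right diagnosis.
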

\begin{rem}
Here we could get that $\mathcal{ML}(A)=\mathcal{ML}(A^{[1]})$ holds for each affine domain $A$ over $\mathbbm{k}$ with $\textup{tr.deg}_{\mathbbm{k}}A=1$.
\end{rem}
\begin{thm}\label{thm2}\textup{(\cite{Fr17}, Theorem 9.12)}\\
Given an affine unique factorization domain $A$ over an algebraically closed field $\mathbbm{k}$ with $\textup{tr.deg}_{\mathbbm{k}}A=2$. Then $\mathcal{ML}(A)=\mathbbm{k}$ is equivalent to $A=\mathbbm{k}^{[2]}$.
\end{thm}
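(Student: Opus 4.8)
The statement splits into two implications, of which the reverse one ($\mathcal{ML}(A)=\mathbbm{k}\Rightarrow A=\mathbbm{k}^{[2]}$) carries all the weight. The plan is to dispose of the easy direction first: if $A=\mathbbm{k}^{[2]}=\mathbbm{k}[x,y]$, then $\tfrac{\partial}{\partial x}$ and $\tfrac{\partial}{\partial y}$ are locally nilpotent with kernels $\mathbbm{k}[y]$ and $\mathbbm{k}[x]$, whose intersection is $\mathbbm{k}$, so $\mathcal{ML}(A)=\mathbbm{k}$ exactly as in the computation of $\mathcal{ML}(\mathbbm{k}^{[n]})$ above. For the converse I would begin by extracting two structural consequences of $\mathcal{ML}(A)=\mathbbm{k}$. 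First, $\textup{LND}(A)$ must contain at least two nonzero derivations with \emph{distinct} kernels, since a single $D$ would give $\mathcal{ML}(A)\subseteq\ker D\neq\mathbbm{k}$. Second, $A^*=\mathbbm{k}^*$: every unit is annihilated by every $D\in\textup{LND}(A)$ (from $u\cdot u^{-1}=1$ together with factorial closedness of $\ker D$), so $A^*\subseteq\mathcal{ML}(A)=\mathbbm{k}$.

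The next step is to show that the kernel of a nonzero LND is already a polynomial ring. Fix nonzero $D$ and put $B=\ker D$. Because $B$ is factorially closed in the UFD $A$, it is itself a UFD, integrally closed, finitely generated, and of transcendence degree $1$; hence $C=\textup{Spec}(B)$ is a smooth affine curve with $\textup{Cl}(C)=0$ and, by the previous paragraph, $B^*=\mathbbm{k}^*$. I would then argue $C\cong\mathbb{A}^1$ as follows: passing to the smooth projective model $\bar C$, triviality of $\textup{Pic}(C)$ forces the genus of $\bar C$ to be $0$, since the images of the finitely many points at infinity cannot generate the positive-dimensional Jacobian of a curve of positive genus; thus $\bar C=\mathbb{P}^1$, and triviality of $B^*$ then forces exactly one point at infinity. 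Therefore $B=\mathbbm{k}[f]\cong\mathbbm{k}^{[1]}$.

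Applying this to the two LNDs with distinct kernels yields $\ker D_1=\mathbbm{k}[f]$ and $\ker D_2=\mathbbm{k}[g]$. Since each kernel is integrally closed in $A$, I would next check that $f$ and $g$ are algebraically independent over $\mathbbm{k}$: an algebraic relation would make $g$ integral over $\mathbbm{k}[f]$, hence $g\in\mathbbm{k}[f]$, which collapses the two kernels into one and contradicts distinctness. Consequently $\mathbbm{k}[f,g]\cong\mathbbm{k}^{[2]}$ embeds in $A$, and $(f,g)$ defines a dominant morphism $\Phi\colon X=\textup{Spec}(A)\to\mathbb{A}^2$. Everything then reduces to proving $\Phi$ is an isomorphism, i.e. $A=\mathbbm{k}[f,g]$.

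The hard part will be exactly this last reduction. The $\mathbb{G}_a$-action attached to $D_1$ presents $f\colon X\to\mathbb{A}^1$ as an $\mathbb{A}^1$-fibration whose general fibre is a single orbit, on which $g$ restricts non-constantly, and symmetrically for $D_2$; the task is to upgrade ``two independent $\mathbb{A}^1$-fibrations on a smooth factorial surface'' to ``$X$ is the trivial $\mathbb{A}^1$-bundle $\mathbb{A}^2$.'' Concretely I would aim to show $\deg_{D_1}g=1$, equivalently that $D_1$ (or a suitable LND built from the two) admits a slice $s$, whereupon the Slice Theorem gives $A=\ker D_1[s]=\mathbbm{k}[f,s]=\mathbbm{k}^{[2]}$. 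Controlling this degree is precisely where smoothness and factoriality enter: one must rule out multiple and reducible fibres of the fibration using $\textup{Cl}(A)=0$ and $A^*=\mathbbm{k}^*$, so that the bundle is locally trivial over $\mathbb{A}^1$ and hence globally trivial because $\textup{Pic}(\mathbb{A}^1)=0$. I expect the fibre bookkeeping to be the genuinely delicate heart of the proof, and it is exactly the input from the surface theory of Miyanishi on which \cite{Fr17} rests.
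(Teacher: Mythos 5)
You should first be aware that the paper contains no proof of this statement: it is quoted as a known result from \cite{Fr17} (Theorem 9.12), so there is nothing in the paper to compare your argument against, and your proposal has to stand on its own. On its own terms, the easy direction and the preliminary reductions are correct and standard. The observation that $A^*\subseteq\mathcal{ML}(A)=\mathbbm{k}$ via factorial closedness of kernels is right; the identification of $\textup{Ker}(D)$ with $\mathbbm{k}^{[1]}$ is right, though you should cite Zariski's finiteness theorem for the finite generation of the kernel (automatic here since its transcendence degree is $1\leq 2$), and your phrase ``integral over $\mathbbm{k}[f]$, hence $g\in\mathbbm{k}[f]$'' should really be ``algebraic over $\mathbbm{k}[f]$, hence $g\in\mathbbm{k}[f]$ because kernels of locally nilpotent derivations are algebraically closed in $A$''; algebraic dependence over $\mathbbm{k}$ does not by itself give integrality over $\mathbbm{k}[f]$.

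The genuine gap is the last step, and it is not a small one: passing from ``$\mathbbm{k}[f,g]\subseteq A$ with $\textup{Ker}(D_1)=\mathbbm{k}[f]$ and $\textup{Ker}(D_2)=\mathbbm{k}[g]$'' to ``$A=\mathbbm{k}[f,g]$'' is where the entire content of the theorem sits, and your proposal only announces an intention (``I would aim to show $\deg_{D_1}g=1$'', ``I expect the fibre bookkeeping to be the delicate heart''). Producing a slice is a restatement of the conclusion, not an argument: the Slice Theorem applies only after one shows the plinth ideal $D_1(A)\cap\mathbbm{k}[f]$ is generated by a unit, and ruling out a non-unit generator $a(f)$ --- equivalently, controlling the degenerate fibres of $f$ --- is exactly the hard part. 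This requires either Miyanishi's surface-theoretic characterization of $\mathbb{A}^2$ or the algebraic degree-function argument of Crachiola and Makar-Limanov (\cite{CM09}), neither of which is sketched. Note also that algebraic closedness of $\mathbbm{k}$ must enter essentially at this stage (e.g.\ to find roots of $a(f)$): the paper's Example 3.2.4 shows the statement fails over $\mathbb{R}$, so any proposed argument that never uses algebraic closedness beyond the curve-genus step cannot be complete.
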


\begin{rem}
Example \ref{example2} shows that $\textup{tr.deg}_{\mathbbm{k}}A=2$ is necessary for this theorem.
\end{rem}

\begin{exam}\label{example2}
Consider the affine $\mathbb{C}$-domain $A=\mathbb{C}[x,y,z,w]/(xz-yw-1)$, one has $A$ is an affine UFD over $\mathbb{C}$ and $\mathcal{ML}(A)=\mathbb{C}$.

\begin{proof}
(1) First, since
$$A/(w)\cong\mathbb{C}[x,y,z]/(xy-1)=(\mathbb{C}[x,y]/(xy-1))[z]\cong(\mathbb{C}[x][\frac{1}{x}])[z]\cong(\mathbb{C}[x])_x[z]$$
is an integral domain (actually an affine UFD over $\mathbb{C}$), $w$ is prime in $A$.\\
Let $x'=\frac{x}{w},y'=y,z'=z$, then $x',y',z'$ are algebraically independent and
$$A_w\cong A[\frac{1}{w}]=\mathbb{C}[x',y',z'][\frac{1}{w}]=\mathbb{C}[x',y',z']$$
is a UFD over $\mathbb{C}$. Therefore, by Proposition \ref{UFDcheckway2}, $A$ is an affine UFD over $\mathbb{C}$.\\
(2) One can check that the derivations $D_1,D_2,D_3$ and $D_4$ on $A$ generated by
$$\begin{matrix}
D_1(x)=0, &D_1(y)=z, &D_1(z)=0, &D_1(w)=x \\
D_2(x)=0, &D_2(y)=w, &D_2(z)=x, &D_2(w)=0 \\
D_3(x)=z, &D_3(y)=0, &D_3(z)=0, &D_3(w)=y \\
D_4(x)=w, &D_4(y)=0, &D_4(z)=y, &D_4(w)=0
\end{matrix}$$
are locally nilpotent. Here one can see $\textup{Ker}(D_1)$ and $\mathbb{C}[x,z]$ are both algebraically closed in $A$. What's more, one has $\mathbb{C}[x,z]\subseteq\textup{Ker}(D_1)\subset A$ and $$\textup{tr.deg}_{\mathbb{C}}\textup{Ker}(D_1)=\textup{tr.deg}_{\mathbb{C}}\mathbb{C}[x,z]=2$$
Hence $\textup{Ker}(D_1)=\mathbb{C}[x,z]$. Similarly, we could obtain $\textup{Ker}(D_2)=\mathbb{C}[x,w],\textup{Ker}(D_3)=\mathbb{C}[y,z]$  and $\textup{Ker}(D_4)=\mathbb{C}[y,w]$. In this case,
$$\mathbb{C}=\bigcap^4_{i=1}\textup{Ker}(D_i)\supseteq\mathcal{ML}(A)\supseteq\mathbb{C}$$
and thus $\mathcal{ML}(A)=\mathbb{C}$. Moreover, since $\mathbb{C}\subseteq\mathcal{ML}(A^{[1]})\subseteq\mathcal{ML}(A)=\mathbb{C}$, it is clear that $\mathcal{ML}(A^{[1]})=\mathcal{ML}(A)=\mathbb{C}$.
\end{proof}
\end{exam}

\subsection{On Danielewski domains}
\begin{defn}
Given integer $n\geq0$ and a polynomial $p\in\mathcal{S}$ where
$$\mathcal{S}:=\bigcup_{d\geq1}\{p\in\mathbbm{k}[x,y]|d=\deg_y{p}=\deg_{y}p(0,y)\}$$
we define the subring $\mathbbm{D}_{(n,p)}$ of $\mathbbm{k}[x,x^{-1},y]$ by
$$\mathbbm{D}_{(n,p)}:=\mathbbm{k}[x,y,z]/(x^nz-p(x,y))$$
to be a \textbf{Danielewski domain} over $\mathbbm{k}$.
\end{defn}
\begin{rem}\label{Daniel0}
Here we should notice that we could assume $\deg_xp(x,y)<n$. If a monomial $x^sy^t$ appears in $p$ with $s\geq n$, then one has $x^n(z-x^{s-n}y^t)=p'(x,y)$ for $p'(x,y)=p(x,y)-x^sy^t$. Therefore, one has $\mathbbm{D}_{(n,p)}\cong\mathbbm{D}_{(n,p')}$. Repeating this, we could eventually obtain a $p_0(x,y)\in\mathcal{S}$ with $\deg_xp_0(x,y)<n$ and $\mathbbm{D}_{(n,p)}\cong\mathbbm{D}_{(n,p_0)}$.
\end{rem}
Danielewski domains over $\mathbbm{k}$ are clearly affine $\mathbbm{k}$-domains and we want to know when one such domain would become an affine UFD over $\mathbbm{k}$. Firstly, we introduce the following simple and well-known result.
\begin{ppst}\label{Daniel1}
The Danielewski domain
$$\mathbbm{D}_{(1,p)}=\mathbbm{k}[x,y,z]/(xz-p(y))$$
is an affine UFD if and only if $p\in\mathbbm{k}[y]-\mathbbm{k}$ is irreducible.

\begin{proof}
If nonconstant polynomial $p(y)\in\mathbbm{k}[y]$ is irreducible, then the ideal $(p(y))$ is prime in both $\mathbbm{k}[y]$ and $\mathbbm{k}[y,z]$. Therefore, the quotient ring
$$\mathbbm{D}_{(1,p)}/(x)=\mathbbm{k}[y,z]/(p(y))$$
is an integral domain which implies that $x$ is prime in $\mathbbm{D}_{(1,p)}$. Note that
$$(\mathbbm{D}_{(1,p)})_x\cong\mathbbm{D}_{(1,p)}[\frac{1}{x}]=\mathbbm{k}[x,y,\frac{1}{x}]$$
is a UFD over $\mathbbm{k}$, $\mathbbm{D}_{(1,p)}$ is an affine UFD over $\mathbbm{k}$ according to Proposition \ref{UFDcheckway2}.\\
On the other hand, a reducible $p\in\mathbbm{k}[y]-\mathbbm{k}$ can be decomposed as a product of nonconstant polynomials in $\mathbbm{k}$. In this case, $\mathbbm{D}_{(1,p)}$ cannot be UFD or each irreducible element in $\mathbbm{D}_{(1,p)}$ is prime, which implies a contradiction form $xz=p(y)$.
\end{proof}
\end{ppst}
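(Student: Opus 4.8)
The plan is to prove the two implications separately, the organizing principle being that factoriality of $\mathbbm{D}_{(1,p)}$ is entirely governed by the single element $x$. For the sufficiency direction ($p$ irreducible $\Rightarrow$ UFD) I would invoke the Nagata-type criterion of Proposition \ref{UFDcheckway2} with $S$ the multiplicative set generated by $x$. First I would compute $\mathbbm{D}_{(1,p)}/(x)\cong\mathbbm{k}[y,z]/(p(y))\cong(\mathbbm{k}[y]/(p))[z]$; since $p$ is irreducible, $\mathbbm{k}[y]/(p)$ is a field, so this quotient is a domain and $x$ is prime. Next I would observe that inverting $x$ forces $z=p(y)/x$, so $(\mathbbm{D}_{(1,p)})_x\cong\mathbbm{k}[x,x^{-1},y]$, a localization of a polynomial ring and hence a UFD; Proposition \ref{UFDcheckway2} then finishes this half.

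For the necessity direction I would argue by contraposition: assuming $p=p_1p_2$ with both $p_i$ nonconstant, I would exhibit an element that is irreducible but not prime, which is impossible in a UFD. The natural candidate is $x$ itself. Its failure to be prime is immediate from the same quotient computation, since $\mathbbm{D}_{(1,p)}/(x)\cong\mathbbm{k}[y,z]/(p_1p_2)$ is not a domain: the classes of $p_1$ and $p_2$ are nonzero but multiply to $0$.

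The real content, and the step I expect to be the main obstacle, is verifying that $x$ is \emph{irreducible} in $\mathbbm{D}_{(1,p)}$ for any nonconstant $p$. My plan is to exploit the $\mathbb{Z}$-grading given by $\deg x=1$, $\deg z=-1$, $\deg y=0$, under which the defining relation $xz-p(y)$ is homogeneous of degree $0$, so $\mathbbm{D}_{(1,p)}$ is a graded domain. I would identify the homogeneous pieces using the normal form $\mathbbm{D}_{(1,p)}=\mathbbm{k}[x,y]\oplus\bigoplus_{k\geq1}\mathbbm{k}[y]z^{k}$ (directness being seen after mapping into $\mathbbm{k}[x,x^{-1},y]$ via $z\mapsto p(y)/x$): the degree-$d$ piece is $\mathbbm{k}[y]x^{d}$ for $d>0$, is $\mathbbm{k}[y]z^{-d}$ for $d<0$, and is $\mathbbm{k}[y]$ for $d=0$. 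In a graded domain the factors of a homogeneous element may be taken homogeneous (compare lowest and highest components), so a factorization $x=fg$ satisfies $\deg f+\deg g=1$. A short case analysis, using the reduction $x^{d}z^{d-1}=p(y)^{d-1}x$, shows that every factorization produces a factor of the form $p(y)^{m}$, which is a constant only when $m=0$; hence one of $f,g$ lies in $\mathbbm{k}^{*}$ and $x$ is irreducible.

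The delicate bookkeeping is confined to that last paragraph: pinning down the homogeneous components precisely and tracking how $p(y)$ is absorbed when $x$- and $z$-powers collide. This is also exactly where the hypothesis that $p$ is nonconstant is used. Once $x$ is shown to be irreducible yet not prime when $p$ is reducible, $\mathbbm{D}_{(1,p)}$ cannot be a UFD, completing the contrapositive and hence the equivalence.
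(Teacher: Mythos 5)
Your proposal is correct, and its forward direction ($p$ irreducible $\Rightarrow$ UFD) is exactly the paper's argument: $x$ is prime because $\mathbbm{D}_{(1,p)}/(x)\cong\mathbbm{k}[y,z]/(p(y))$ is a domain, the localization at $x$ is $\mathbbm{k}[x,x^{-1},y]$, and Proposition \ref{UFDcheckway2} finishes. Where you genuinely diverge is the converse. The paper disposes of it in one sentence --- if $\mathbbm{D}_{(1,p)}$ were a UFD every irreducible would be prime, ``which implies a contradiction from $xz=p(y)$'' --- without ever identifying which element is irreducible but not prime, and in particular without verifying that any of $x$, $z$, or the $p_i(y)$ is actually irreducible in $\mathbbm{D}_{(1,p)}$; as written that step is a sketch rather than a proof. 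You supply precisely the missing verification: non-primeness of $x$ from the quotient $\mathbbm{k}[y,z]/(p_1p_2)$ failing to be a domain, and irreducibility of $x$ via the $\mathbb{Z}$-grading $\deg x=1$, $\deg y=0$, $\deg z=-1$ (equivalently, the grading inherited from $x$-degree on $\mathbbm{k}[x,x^{-1},y]$), under which factors of the homogeneous element $x$ must themselves be homogeneous and the collision rule $x^az^{a-1}=p(y)^{a-1}x$ forces one factor into $\mathbbm{k}^{*}$. Your case analysis is sound (the degree-$d$ components are as you describe, and the hypothesis that $p$ is nonconstant is used exactly where you say it is), so your version buys a complete proof of the necessity direction at the cost of setting up the graded structure; the paper's version is shorter but leaves the key irreducibility claim unjustified.
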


\begin{exam}\label{example3}
Consider affine $\mathbbm{k}$-domain $A=\mathbbm{k}[x,y,z]/(xz-y^2-1)$, one can see from Proposition \ref{Daniel0} that $A$ is an affine UFD when $\mathbbm{k}=\mathbb{R}$ and is not when $\mathbbm{k}=\mathbb{C}$.
\end{exam}

\begin{ppst}\label{example3+}
If Danielewski domain $\mathbbm{D}_{(1,p)}$ is a UFD, then $\mathcal{ML}(\mathbbm{D}_{(1,p)})=\mathbbm{k}$.

\begin{proof}
One can check that the derivations $D_1$ and $D_2$ on $\mathbbm{D}_{(1,p)}$ generated by
$$\begin{matrix}
D_1(x)=0, &D_1(y)=x, &D_1(z)=p'(y) \\
D_2(x)=p'(y), &D_2(y)=z, &D_2(z)=0
\end{matrix}$$
are locally nilpotent. Here one can see $\textup{Ker}(D_1)$ and $\mathbbm{k}[x]$ are both algebraically closed in $\mathbbm{D}_{(1,p)}$. What's more, one has $\mathbbm{k}[x]\subseteq\textup{Ker}(D_1)\subset\mathbbm{D}_{(1,p)}$ and $$\textup{tr.deg}_{\mathbbm{k}}\textup{Ker}(D_1)=\textup{tr.deg}_{\mathbbm{k}}\mathbbm{k}[x]=1$$
Hence $\textup{Ker}(D_1)=\mathbbm{k}[x]$. Also, we could obtain that $\textup{Ker}(D_2)=\mathbbm{k}[z]$. In this case,
$$\mathbbm{k}=\textup{Ker}(D_1)\cap\textup{Ker}(D_2)\supseteq\mathcal{ML}(\mathbbm{D}_{(1,p)})\supseteq\mathbbm{k}$$
and thus $\mathcal{ML}(\mathbbm{D}_{(1,p)})=\mathbbm{k}$, which implies that $\mathcal{ML}((\mathbbm{D}_{(1,p)})^{[1]})=\mathbbm{k}$.
\end{proof}
\end{ppst}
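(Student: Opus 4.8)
The plan is to realize $\mathcal{ML}(\mathbbm{D}_{(1,p)})$ as the intersection of the kernels of the two explicit derivations $D_1$ and $D_2$ displayed in the statement, and to show that this intersection is already $\mathbbm{k}$. First I would confirm that $D_1$ and $D_2$ genuinely descend to $\mathbbm{k}[x,y,z]/(xz-p(y))$, which only requires checking that each maps the defining relation into the ideal $(xz-p(y))$; indeed $D_1(xz-p(y)) = x\,p'(y) - p'(y)\,x = 0$ and symmetrically $D_2(xz-p(y)) = 0$, so both are $\mathbbm{k}$-derivations of $\mathbbm{D}_{(1,p)}$. Local nilpotence then follows from Proposition \ref{k in Der}: since $\textup{Nil}(D_i)$ is a subalgebra, it suffices to see that $x,y,z$ all lie in it. For $D_1$ this is clear for $x$ (it is killed) and for $y$ (as $D_1^2(y)=0$), while $z$ has finite $D_1$-degree because $D_1(z)=p'(y)$ is a polynomial in $y$ on which $D_1$ acts like $x\,\partial_y$; the case of $D_2$ is symmetric under interchanging $x$ and $z$. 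I note that this step is insensitive to whether $p$ is irreducible.

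The crux is to compute the two kernels exactly as $\textup{Ker}(D_1)=\mathbbm{k}[x]$ and $\textup{Ker}(D_2)=\mathbbm{k}[z]$; the inclusions $\supseteq$ are immediate. For the reverse inclusions I would use that a nonzero locally nilpotent derivation $D$ of an affine domain satisfies $\textup{tr.deg}_{\mathbbm{k}}\textup{Ker}(D) = \textup{tr.deg}_{\mathbbm{k}}\mathbbm{D}_{(1,p)} - 1$, together with the algebraic closedness of $\mathbbm{k}[x]$ in $\mathbbm{D}_{(1,p)}$. Since $\mathbbm{D}_{(1,p)}$ is a hypersurface with $x,y$ algebraically independent, $\textup{tr.deg}_{\mathbbm{k}}\mathbbm{D}_{(1,p)}=2$, and hence $\textup{tr.deg}_{\mathbbm{k}}\textup{Ker}(D_1)=1=\textup{tr.deg}_{\mathbbm{k}}\mathbbm{k}[x]$; as $\textup{Ker}(D_1)\supseteq\mathbbm{k}[x]$, every element of $\textup{Ker}(D_1)$ is algebraic over $\mathbbm{k}[x]$. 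Because $\mathbbm{k}(x)$ is algebraically closed in the fraction field $\mathbbm{k}(x,y)$ of $\mathbbm{D}_{(1,p)}$, the ring $\mathbbm{k}[x]$ is algebraically closed in $\mathbbm{D}_{(1,p)}$, so such an element in fact lies in $\mathbbm{k}[x]$. This gives $\textup{Ker}(D_1)=\mathbbm{k}[x]$, and the same argument with $x$ and $z$ interchanged gives $\textup{Ker}(D_2)=\mathbbm{k}[z]$.

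Finally I would intersect the kernels. Writing $\mathbbm{D}_{(1,p)}\subseteq\mathbbm{k}[x,x^{-1},y]$ with $z=p(y)/x$, a nonconstant element of $\mathbbm{k}[x]$ carries only non-negative powers of $x$ whereas a nonconstant element of $\mathbbm{k}[z]$ forces negative powers of $x$ paired with a nonzero multiple of $p(y)$, so the two can agree only as constants; thus $\mathbbm{k}[x]\cap\mathbbm{k}[z]=\mathbbm{k}$. Together with $\mathbbm{k}\subseteq\mathcal{ML}(\mathbbm{D}_{(1,p)})$ from Proposition \ref{k in Der}, this forces $\mathbbm{k}\subseteq\mathcal{ML}(\mathbbm{D}_{(1,p)})\subseteq\textup{Ker}(D_1)\cap\textup{Ker}(D_2)=\mathbbm{k}$, and the inclusion $\mathcal{ML}(A^{[1]})\subseteq\mathcal{ML}(A)$ recorded in the introduction then yields $\mathcal{ML}((\mathbbm{D}_{(1,p)})^{[1]})=\mathbbm{k}$ as well. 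The main obstacle is the kernel computation in the middle step: showing $\textup{Ker}(D_1)$ is no larger than $\mathbbm{k}[x]$ hinges on the algebraic closedness of $\mathbbm{k}[x]$ in the domain and on the transcendence-degree drop for kernels of locally nilpotent derivations, whereas the checks of well-definedness, local nilpotence, and the final intersection are routine.
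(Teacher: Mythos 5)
Your proposal is correct and follows essentially the same route as the paper: the same two locally nilpotent derivations $D_1$ and $D_2$, the same identification of their kernels as $\mathbbm{k}[x]$ and $\mathbbm{k}[z]$ via the transcendence-degree drop for kernels of nonzero locally nilpotent derivations together with algebraic closedness, and the same final intersection. You merely spell out the routine verifications (that the derivations descend to the quotient, local nilpotence on generators, and $\mathbbm{k}[x]\cap\mathbbm{k}[z]=\mathbbm{k}$ inside $\mathbbm{k}[x,x^{-1},y]$) that the paper leaves to the reader.
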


This proposition implies that Makar-Limanov's conjecture holds for $\mathbbm{D}_{(1,p)}$ and we want to look into other cases. Furthermore, Alhajjar proved the following theorem in his Ph.D. thesis \cite{Al15} based on some works of Makar-Limanov and Freudenburg listed this result in his book \cite{Fr17} (Theorem 9.2).

\begin{thm} \textup{(\cite{Al15}, Proposition 6.16)}
If $n,d\geq2$ and $\deg_y{p(x,y)}=d$ for $p\in\mathcal{S}$, then $\mathcal{ML}(\mathbbm{D}_{(n,p)})=\mathbbm{k}[x]$.
\end{thm}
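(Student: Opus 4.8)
The plan is to prove the two inclusions $\mathcal{ML}(\mathbbm{D}_{(n,p)})\subseteq\mathbbm{k}[x]$ and $\mathcal{ML}(\mathbbm{D}_{(n,p)})\supseteq\mathbbm{k}[x]$ separately. Throughout write $B:=\mathbbm{D}_{(n,p)}$ and $p_y:=\partial p/\partial y$, and recall that $B$ embeds in $B_x=\mathbbm{k}[x,x^{-1},y]$ via $z\mapsto p(x,y)/x^n$.

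For the inclusion $\mathcal{ML}(B)\subseteq\mathbbm{k}[x]$ it suffices to produce a single locally nilpotent derivation with kernel $\mathbbm{k}[x]$. I would define $D_0$ on $B$ by $D_0(x)=0$, $D_0(y)=x^n$, $D_0(z)=p_y(x,y)$. The Leibniz computation $D_0(x^n z-p)=x^n p_y-(p_x\,D_0(x)+p_y\,D_0(y))=x^n p_y-p_y x^n=0$ shows $D_0$ is a well-defined $\mathbbm{k}$-derivation. On $B_x$ it acts as $x^n\partial_y$, so it is locally nilpotent (each generator is killed after finitely many iterations, using that $p$ has finite $y$-degree), and $\ker D_0=B\cap\ker(x^n\partial_y)=B\cap\mathbbm{k}[x,x^{-1}]=\mathbbm{k}[x]$, the last equality holding because an element of $B$ that is free of $y$ inside $B_x$ must already lie in $\mathbbm{k}[x]$. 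Hence $\mathcal{ML}(B)\subseteq\ker D_0=\mathbbm{k}[x]$.

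The reverse inclusion is the real content: I must show $x\in\ker D$ for \emph{every} $D\in\mathrm{LND}(B)$, since then $\mathbbm{k}[x]\subseteq\ker D$ for all $D$ and so $\mathbbm{k}[x]\subseteq\mathcal{ML}(B)$. The plan is to pass to the associated graded ring via a weight filtration. Writing $p=\sum_i a_i(x)y^i$, the condition $p\in\mathcal{S}$ forces $a_d(0)\neq0$ (and by Remark~\ref{Daniel0} we may also assume $\deg_x p<n$). Assigning the weights $\omega(x)=-1$, $\omega(y)=1$, $\omega(z)=n+d$, one checks that every monomial $x^s y^t$ of $p$ has weight $-s+t<d$ except the term $a_d(0)y^d$ of weight $d$, while $x^n z$ also has weight $d$; since initial forms are multiplicative in the polynomial ring, the initial ideal of $(x^n z-p)$ is principal and
\[
\mathrm{gr}\,B\;\cong\;\mathbbm{k}[x,y,z]/\bigl(x^n z-a_d(0)y^d\bigr)\;\cong\;\mathbbm{k}[x,y,z]/(x^n z-y^d)=:R
\]
after rescaling $y$. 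For any nonzero $D\in\mathrm{LND}(B)$, its top-weight part $\delta:=\mathrm{gr}(D)$ is then a nonzero \emph{homogeneous} element of $\mathrm{LND}(R)$, and the problem is transported to the quasi-homogeneous surface $R$.

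The key step is the classification of homogeneous locally nilpotent derivations of $R$: when $n\geq2$ and $d\geq2$, every nonzero homogeneous $\delta\in\mathrm{LND}(R)$ satisfies $\delta(x)=0$. This is exactly where both hypotheses enter — if $n=1$ or $d=1$ the surface admits derivations moving $x$, in line with Proposition~\ref{example3+} where $\mathcal{ML}=\mathbbm{k}$. To establish the classification I would localize $R$ at $x$, exploit that $\ker\delta$ is factorially closed of transcendence degree one, and run a degree count over the finitely many homogeneous candidates for $\delta(x),\delta(y),\delta(z)$ compatible with $x^n z=y^d$, showing $\delta(x)\neq0$ is numerically impossible precisely when $n,d\geq2$. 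The main obstacle, and the most delicate point of the whole argument, is the transfer back: the homogenization principle only guarantees that $\mathrm{gr}(D)(\bar x)$ equals $\mathrm{gr}(D(x))$ or $0$, so $\delta(x)=0$ a priori yields only that $D(x)$ has strictly smaller $\omega$-degree than expected, not that it vanishes. I would close this gap by combining the filtration's degree bookkeeping with the factorial closedness of $\ker D$ in $B$, arguing that a nonzero $D(x)$ would produce an incompatible leading form in $R$; this forces $D(x)=0$ for all $D$, and together with the first inclusion gives $\mathcal{ML}(B)=\mathbbm{k}[x]$.
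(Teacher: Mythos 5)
First, a point of reference: the paper does not prove this theorem at all — it is imported verbatim from Alhajjar's thesis (Proposition 6.16) and Freudenburg's book (Theorem 9.2), so there is no in-paper argument to compare against. Your proposal is in effect a reconstruction of the proof in those sources, and its architecture is the correct, standard one. The easy inclusion is essentially complete: $D_0=(0,x^n,p_y)$ is a well-defined locally nilpotent derivation, and $\ker D_0=B\cap\mathbbm{k}[x,x^{-1}]=\mathbbm{k}[x]$ does hold (the cleanest justification of the last equality is that $B/xB\cong\mathbbm{k}[y,z]/(p(0,y))\neq0$, so no genuinely negative power of $x$ can lie in $B$). Your weight computation for the hard inclusion is also right: with $\omega(x,y,z)=(-1,1,n+d)$, membership $p\in\mathcal{S}$ forces $a_d(0)\neq0$, the unique top-weight monomial of $x^nz-p$ is $x^nz-a_d(0)y^d$, and principality of the ideal makes the initial ideal principal, so $\mathrm{gr}\,B\cong\mathbbm{k}[x,y,z]/(x^nz-y^d)$.

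As a proof, however, the proposal has a genuine gap exactly where the hypotheses $n,d\geq2$ do their work: the claim that every nonzero homogeneous $\delta\in\mathrm{LND}(\mathbbm{k}[x,y,z]/(x^nz-y^d))$ kills $x$ is asserted ("run a degree count over the finitely many homogeneous candidates") but never carried out, and this classification is the entire mathematical content of the theorem — everything surrounding it is standard bookkeeping. Two further points deserve attention. (i) Because $\omega(x)=-1$ the degree function is unbounded below on $B$, so the existence of $\deg_G(D)$, and hence of a nonzero homogeneous $\mathrm{gr}(D)\in\mathrm{LND}(\mathrm{gr}\,B)$, is not automatic; this is exactly what Proposition \ref{KR2} supplies and must be invoked before $\delta$ can even be formed. (ii) Your "transfer back" is aimed at the wrong object: one should not try to control the leading form of $D(x)$ but rather use $\overline{\ker D}\subseteq\ker\mathrm{gr}(D)$. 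Once the classification gives $\ker\mathrm{gr}(D)=\mathbbm{k}[\bar x]$, any nonconstant $f\in\ker D$ has leading form $c\bar x^{m}$; since every monomial of $\omega$-degree below $-m$ is divisible by $x^{m+1}$, one gets $f-c'=x\cdot g$ (or $f=x^{m}(c+xh)$) with the left side in $\ker D$, and factorial closedness of $\ker D$ then yields $x\in\ker D$ directly. With the classification lemma proved and these two repairs made, the argument closes; without the lemma it remains an outline.
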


Note that if $\mathbbm{D}_{(n,p)}$ is an affine UFD with $n,d\geq2$ and $\deg_y{p(x,y)}=d$, then $x$ should be prime and the quotient ring
$$\mathbbm{D}_{(n,p)}/(x)=\mathbbm{k}[y,z]/(p_y(0,y))$$
is an integral domain. Therefore, $p(0,y)\in\mathbbm{k}[y]$ is irreducible. However, it is impossible when $\mathbbm{k}$ is algebraically closed because $\deg_yp(0,y)=\deg_yp(x,y)=d>1$. So here we only need to consider potential counterexamples to Makar-Limanov's conjecture with a non-algebraically closed field.

\begin{exam}
Consider the Danielewski domain $A=\mathbb{R}[x,y,z]/(x^2z-(y^2+1))$, we could prove the following statements.\\
(1) $A$ is an affine UFD;\\
(2) $\mathcal{ML}(A^{[1]})=\mathbb{R}$.

\begin{proof}
(1) First, since $A/(x)=\mathbb{R}[y,z]/(y^2+1)$ is an integral domain, $x$ is prime in affine $\mathbb{R}$-domain $A$. Then from $z=\frac{y^2+1}{x^2}$, one can see
$$A_x\cong A[\frac{1}{x}]=\mathbb{R}[x,y,\frac{1}{x}]$$
is a UFD. Therefore, $A$ is an affine UFD by Proposition \ref{UFDcheckway2}.\\
(2) Consider the Danielewski domain $B=\mathbb{R}[x,y,z]/(xz-(y^2+1))$, we know that it is an affine UFD by Example \ref{example3} and then
$$\mathcal{ML}(B^{[1]})=\mathcal{ML}(B)=\mathbb{R}$$
from Proposition \ref{example3+}. Moreover, according to Theorem 10.1 in \cite{Fr17}, one has
$$\mathcal{ML}(A^{[1]})=\mathcal{ML}(B^{[1]})=\mathbb{R}$$
so we are done.
\end{proof}
\end{exam}
\begin{rem}
This example implies that when $\mathbbm{k}$ is not algebraically closed, it is possible that $\mathcal{ML}(A^{[1]})\neq\mathcal{ML}(A)$ for certain affine UFD over $\mathbbm{k}$. Therefore, we will only consider the conjecture on \emph{algebraically closed fields} in the following part.
\end{rem}

Now we are moving to Danielewski domains $\mathbbm{D}_{(n,p)}$ with $\deg_y p(x,y) =1$ and $n\geq2$. In this case, the polynomial $p(x,y)=a(x)y+b(x)$ where $a,b\in\mathbbm{k}[x]$ and $a(0)\neq0$. Note that $\gcd{(x^n,a(x))}=1$, then one can see
$$\mathbbm{k}[x,y,z]/(x^nz-a(x)y-b(x))\cong\mathbbm{k}[x,y,z]/(z)\cong\mathbbm{k}[x,y]$$
from $\mathbbm{k}[x,y,z]\cong\mathbbm{k}[x,y,x^nz-a(x)y-b(x)]$. Therefore, the Makar-Limanov invariant of one such Danielewski domain is $\mathbbm{k}$.

\vspace{2mm}
Based on the previous discussions in this part, we clearly have the following result as a conclusion.

\begin{thm}
If $\mathbbm{k}$ is algebraically closed, then $\mathcal{ML}(\mathbbm{D}_{(n,p)})=\mathcal{ML}((\mathbbm{D}_{(n,p)})^{[1]})$ for each Danielewski domain $\mathbbm{D}_{(n,p)}$ over $\mathbbm{k}$.
\end{thm}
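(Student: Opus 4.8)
The plan is to establish the equality by a complete case analysis on the pair $(n,d)$, where $d=\deg_y p\geq 1$, using the results already assembled in this subsection together with the two general inclusions recorded in the introduction, $\mathbbm{k}\subseteq\mathcal{ML}(A)$ and $\mathcal{ML}(A^{[1]})\subseteq\mathcal{ML}(A)$. These inclusions furnish a sandwich: as soon as $\mathcal{ML}(\mathbbm{D}_{(n,p)})=\mathbbm{k}$ is known, the chain $\mathbbm{k}\subseteq\mathcal{ML}(\mathbbm{D}_{(n,p)}^{[1]})\subseteq\mathcal{ML}(\mathbbm{D}_{(n,p)})=\mathbbm{k}$ forces $\mathcal{ML}(\mathbbm{D}_{(n,p)}^{[1]})=\mathbbm{k}$, and the desired identity is immediate. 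Hence the strategy is, in every case, to reduce the problem to the single computation of $\mathcal{ML}(\mathbbm{D}_{(n,p)})$ itself.

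First I would dispose of the cases in which the Makar-Limanov invariant collapses to $\mathbbm{k}$. When $n=0$ the relation reads $z=p(x,y)$, so $\mathbbm{D}_{(0,p)}\cong\mathbbm{k}^{[2]}$ and $\mathcal{ML}=\mathbbm{k}$. When $d=1$ with $n\geq 1$, the coordinate change carried out just before the theorem gives $\mathbbm{D}_{(n,p)}\cong\mathbbm{k}^{[2]}$, again with $\mathcal{ML}=\mathbbm{k}$. When $n=1$ and $d$ is arbitrary, the two locally nilpotent derivations $D_1,D_2$ of Proposition \ref{example3+} are in fact defined for every $p$, and the transcendence-degree argument given there shows their kernels are $\mathbbm{k}[x]$ and $\mathbbm{k}[z]$, whose intersection is $\mathbbm{k}$; thus $\mathcal{ML}(\mathbbm{D}_{(1,p)})=\mathbbm{k}$. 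In each of these families the sandwich closes the argument at once.

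The remaining case, $n\geq 2$ and $d\geq 2$, is the substantial one. Here Alhajjar's theorem (stated above) gives $\mathcal{ML}(\mathbbm{D}_{(n,p)})=\mathbbm{k}[x]$, so the content of the statement is the separate evaluation $\mathcal{ML}(\mathbbm{D}_{(n,p)}^{[1]})=\mathbbm{k}[x]$. Since $\mathcal{ML}(\mathbbm{D}_{(n,p)}^{[1]})\subseteq\mathcal{ML}(\mathbbm{D}_{(n,p)})=\mathbbm{k}[x]$ always holds, it suffices to prove the reverse containment $\mathbbm{k}[x]\subseteq\mathcal{ML}(\mathbbm{D}_{(n,p)}^{[1]})$, that is, that $x$ lies in the kernel of every locally nilpotent derivation $\delta$ of the fourfold $\mathbbm{D}_{(n,p)}^{[1]}=\mathbbm{k}[x,y,z,t]/(x^nz-p)$. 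I would attempt this by fixing such a $\delta$, passing to the filtration it induces and its associated homogeneous derivation, and then tracking the image of $x$ by degree bookkeeping in the generators $y,z,t$, exploiting that the kernel of a locally nilpotent derivation is factorially closed.

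I expect this last step to be the main obstacle of the entire theorem. The adjoined variable $t$ genuinely enlarges the supply of locally nilpotent derivations, and the heart of the matter is to rule out that any derivation of $\mathbbm{D}_{(n,p)}^{[1]}$ not inherited from $\mathbbm{D}_{(n,p)}$ moves $x$. Controlling these additional $\mathbb{G}_a$-actions on the fourfold requires a genuine geometric analysis rather than a formal manipulation, and it is precisely here that any proof of the statement in full generality must do its real work.
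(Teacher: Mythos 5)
Your first three cases ($n=0$; $d=1$; $n=1$) are sound and agree with what the paper assembles: the sandwich $\mathbbm{k}\subseteq\mathcal{ML}(A^{[1]})\subseteq\mathcal{ML}(A)$ plus the computation $\mathcal{ML}(\mathbbm{D}_{(n,p)})=\mathbbm{k}$ settles each of them, and you are right that the derivations of Proposition \ref{example3+} need no factoriality hypothesis once the normalization $\deg_x p<n=1$ forces $p\in\mathbbm{k}[y]$. The gap is in how you treat $n\geq2$, $d\geq2$. The paper never attempts the evaluation $\mathcal{ML}((\mathbbm{D}_{(n,p)})^{[1]})=\mathbbm{k}[x]$ there; its key observation, stated just after Alhajjar's theorem, is that this case is \emph{empty} for the conjecture: if $\mathbbm{D}_{(n,p)}$ with $n,d\geq2$ were a UFD, then $x$ would be prime, so $\mathbbm{D}_{(n,p)}/(x)\cong\mathbbm{k}[y,z]/(p(0,y))$ would be a domain and $p(0,y)$ would be irreducible of degree $d\geq2$ --- impossible over an algebraically closed field. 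Read, as the surrounding discussion makes clear, as the assertion that Conjecture \ref{conj} holds for Danielewski domains over algebraically closed $\mathbbm{k}$, the theorem therefore reduces entirely to your easy cases; there is no ``substantial case'' left to prove, and the missing ingredient in your argument is this factoriality obstruction, not a harder computation on the fourfold.

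Worse, the program you outline for the remaining case cannot be carried out, because the containment $\mathbbm{k}[x]\subseteq\mathcal{ML}((\mathbbm{D}_{(n,p)})^{[1]})$ you set out to prove is false in general. Take $\mathbbm{k}=\mathbb{C}$, $n=2$, $p=y^2-1$. Alhajjar's theorem gives $\mathcal{ML}(\mathbbm{D}_{(2,p)})=\mathbb{C}[x]$, but the Danielewski isomorphism $(\mathbbm{D}_{(2,p)})^{[1]}\cong(\mathbbm{D}_{(1,p)})^{[1]}$ --- precisely the Theorem 10.1 of \cite{Fr17} that the paper itself invokes to construct its counterexample over $\mathbb{R}$ --- together with $\mathcal{ML}(\mathbbm{D}_{(1,p)})=\mathbbm{k}$ (the derivations of Proposition \ref{example3+} once more, which do not use the UFD property) yields $\mathcal{ML}((\mathbbm{D}_{(2,p)})^{[1]})=\mathbb{C}\neq\mathbb{C}[x]$. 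So the adjoined variable genuinely produces locally nilpotent derivations of the fourfold that move $x$, and no filtration or degree bookkeeping can rule them out. (This also shows that the theorem is false if read literally for \emph{all} Danielewski domains, which is how you read it; restricting to the factorial ones, i.e.\ to the scope of Conjecture \ref{conj}, is exactly what rescues the statement.)
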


\newpage
\subsection{On Koras-Russell threefolds}
\begin{defn}
A \textbf{Koras-Russell threefold of the first kind} is defined by
$$R=\mathbbm{k}[x,y,z,w]/(x+x^dy+z^u+w^v)$$
where $d,u,v\geq2$ and $\gcd(u,v)=1$.
\end{defn}

In this part, we are going to investigate the conjecture on Koras-Russell threefolds which are clearly affine domains. First of all, we want to know when a Koras-Russell threefold $R$ is an affine UFD. Note that
$$x+x^dy+z^u+w^v=0\Rightarrow y=-\frac{1}{x^{d-1}}-\frac{z^u}{x^d}-\frac{w^v}{x^d}\in\mathbbm{k}[x,z,w][\frac{1}{x}]$$
then one can see
$$R_x\cong R[\frac{1}{x}]=\mathbbm{k}[x,z,w][\frac{1}{x}]$$
Hence $R_x$ is a localization of a polynomial ring which is a UFD. It is not hard to check that the element $x$ is prime in $R$. Therefore, we are able to see the following result according to Proposition \ref{UFDcheckway2}.

\begin{ppst}
Each Koras-Russell threefold $R$ is an affine UFD.
\end{ppst}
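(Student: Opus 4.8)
The plan is to apply Proposition \ref{UFDcheckway2} with the multiplicatively closed set generated by the single element $x$, exactly as the preceding paragraph already outlines. To do this rigorously, I would verify two things: first, that $x$ is a prime element of $R$; and second, that the localization $R_x$ is a UFD. The second point is essentially free, since the paragraph before the statement computes $R_x \cong \mathbbm{k}[x,z,w][\frac{1}{x}]$, which is a localization of the polynomial ring $\mathbbm{k}^{[3]}$ and hence a UFD (a localization of a UFD at a multiplicative set is again a UFD). So the real content is the primality of $x$.

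To establish that $x$ is prime, the cleanest route is to show that the quotient $R/(x)$ is an integral domain. First I would identify this quotient explicitly: setting $x=0$ in the defining relation $x + x^d y + z^u + w^v = 0$ (recall $d \geq 2$, so both $x$ and $x^d y$ vanish) gives
$$R/(x) \cong \mathbbm{k}[y,z,w]/(z^u + w^v).$$
Thus the task reduces to checking that $z^u + w^v$ is a prime element of $\mathbbm{k}[y,z,w]$, equivalently that $\mathbbm{k}[z,w]/(z^u+w^v)$ is a domain (since adjoining the free variable $y$ preserves this), equivalently that $z^u + w^v$ is irreducible in $\mathbbm{k}[z,w]$. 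This is where I expect the main obstacle to lie, and it is precisely the place where the hypothesis $\gcd(u,v)=1$ enters. The irreducibility of $z^u + w^v$ over an algebraically closed field $\mathbbm{k}$ follows from a standard coprimality argument: one can show that any factorization would force a common factor structure on the exponents, and $\gcd(u,v)=1$ rules this out. One concrete way is to substitute $w \mapsto 1$ (or work in the function field), reducing to the irreducibility of the one-variable polynomial $z^u + 1$ together with a degree/Newton-polygon bookkeeping that uses $\gcd(u,v)=1$ to preclude nontrivial factorizations.

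Having verified that $z^u+w^v$ is irreducible, hence prime, in $\mathbbm{k}[z,w]$, I would conclude that $R/(x)$ is a domain, so $x$ is prime in $R$. Combined with the fact that $R_x$ is a UFD, Proposition \ref{UFDcheckway2} then yields immediately that $R$ is an affine UFD over $\mathbbm{k}$, completing the proof. The only genuinely delicate step is the irreducibility of $z^u+w^v$; the primality-via-quotient reduction and the localization computation are both routine, the latter being already recorded in the text preceding the proposition.
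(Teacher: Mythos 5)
Your proof follows the paper's argument exactly: the paper likewise localizes at $x$, observes that $R_x\cong\mathbbm{k}[x,z,w][\frac{1}{x}]$ is a UFD, and invokes Proposition \ref{UFDcheckway2}, leaving the primality of $x$ as ``not hard to check.'' The only wobble in your filling-in of that check is the suggestion to reduce the irreducibility of $z^u+w^v$ to that of $z^u+1$, which does not work ($z^u+1$ is reducible for most $u$); a clean route is the Newton polygon of $z^u+w^v$ viewed as a monic polynomial in $z$ over $\mathbbm{k}[[w]]$, whose single segment from $(0,v)$ to $(u,0)$ contains no interior lattice points precisely because $\gcd(u,v)=1$, giving irreducibility over $\mathbbm{k}((w))$ and hence, by Gauss's lemma, in $\mathbbm{k}[z,w]$.
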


Then we wonder if $\mathcal{ML}(R)=\mathcal{ML}(R^{[1]})$ also always holds. In order to figure out this problem, we need to introduce a fundamental result at first.

As a concise version of Makar-Limanov's original proof in \cite{ML96}, Daigle, Freudenburg and Moser-Jauslin proved a feature of Koras-Russell threefolds in \cite{DFM17}. Based on their result, Freudenburg computed Makar-Limanov invariants of those threefolds in new edition of his book \cite{Fr17}.

\begin{thm}\label{KR0} \textup{(\cite{Fr17}, Theorem 9.9)}
Let $R$ be a Koras-Russell threefold
$$R=\mathbbm{k}[x,y,z,w]/(x+x^dy+z^u+w^v)$$
where $d,u,v\geq2$ and $\gcd(u,v)=1$. Then $\mathcal{ML}(R)=\mathbbm{k}[x]$.
\end{thm}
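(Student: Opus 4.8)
My plan is to prove the two inclusions $\mathcal{ML}(R)\subseteq\mathbbm{k}[x]$ and $\mathbbm{k}[x]\subseteq\mathcal{ML}(R)$ separately. Mirroring the strategy of Example \ref{example2}, the first inclusion is obtained by producing finitely many explicit locally nilpotent derivations whose kernels already cut down to $\mathbbm{k}[x]$, and this is the routine half. The second inclusion, which amounts to the assertion that $D(x)=0$ for every $D\in\textup{LND}(R)$, is the genuine difficulty; it is exactly the rigidity feature of Koras-Russell threefolds established in \cite{DFM17}, and I expect it to be the main obstacle.

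For the inclusion $\mathcal{ML}(R)\subseteq\mathbbm{k}[x]$, I would work inside the localization $R_x\cong\mathbbm{k}[x,x^{-1},z,w]$, in which $y=-x^{1-d}-x^{-d}(z^u+w^v)$. The partial derivatives $\partial_z$ and $\partial_w$ are locally nilpotent there, and since $x^d$ lies in their kernels, so are $x^d\partial_z$ and $x^d\partial_w$. A direct computation shows that these restrict to honest derivations $D_1,D_2$ of $R$, namely
$$D_1(x)=0,\quad D_1(z)=x^d,\quad D_1(w)=0,\quad D_1(y)=-uz^{u-1},$$
$$D_2(x)=0,\quad D_2(w)=x^d,\quad D_2(z)=0,\quad D_2(y)=-vw^{v-1},$$
and one checks $D_1(f)=D_2(f)=0$ for $f=x+x^dy+z^u+w^v$, so $D_1,D_2\in\textup{LND}(R)$. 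Each $D_i$ admits a local slice ($z$ for $D_1$, $w$ for $D_2$), so localizing at $x$ identifies $\ker(D_1)$ with $R\cap\mathbbm{k}[x,x^{-1},w]$ and $\ker(D_2)$ with $R\cap\mathbbm{k}[x,x^{-1},z]$. Exactly as in Proposition \ref{example3+} and Example \ref{example2}, these equal $\mathbbm{k}[x,w]$ and $\mathbbm{k}[x,z]$ respectively, using that each kernel is factorially closed of transcendence degree two and that $\mathbbm{k}[x,w]$, $\mathbbm{k}[x,z]$ are algebraically closed in $R$ of the same transcendence degree. Intersecting gives $\ker(D_1)\cap\ker(D_2)=\mathbbm{k}[x]$, whence $\mathcal{ML}(R)\subseteq\ker(D_1)\cap\ker(D_2)=\mathbbm{k}[x]$.

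The hard direction $\mathbbm{k}[x]\subseteq\mathcal{ML}(R)$ requires showing $D(x)=0$ for \emph{every} $D\in\textup{LND}(R)$, after which $\mathbbm{k}[x]$ lies in every kernel and hence in their intersection. Here I would follow Makar-Limanov's homogenization technique as in \cite{DFM17}: choose a weight on the variables so that the three monomials $x^dy$, $z^u$, $w^v$ form the top-degree part of the relation while the linear term $x$ has strictly smaller degree, so that the associated graded ring is the quasihomogeneous threefold $B=\mathbbm{k}[x,y,z,w]/(x^dy+z^u+w^v)$. A nonzero $D\in\textup{LND}(R)$ then induces, by passing to highest-degree parts, a nonzero homogeneous locally nilpotent derivation $\overline{D}$ of $B$. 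The crux—and the step I expect to be hardest, relying essentially on $d,u,v\geq 2$ and $\gcd(u,v)=1$—is the rigidity statement that every homogeneous locally nilpotent derivation of $B$ annihilates $x$. Granting this, $\overline{D}(x)=0$ forces the leading part of $D(x)$ to vanish, and a degree/induction argument then upgrades this to $D(x)=0$ in $R$ itself. Combining the two inclusions yields $\mathcal{ML}(R)=\mathbbm{k}[x]$.
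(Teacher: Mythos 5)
First, a point of reference: the paper does not actually prove Theorem \ref{KR0} --- it is quoted from \cite{Fr17}, and the graded machinery of Lemma \ref{KR1}--Proposition \ref{KR3} is deployed only in the proof of the analogous statement $\mathcal{ML}(R[t])=\mathbbm{k}[x]$ that follows. Your overall architecture --- explicit locally nilpotent derivations for $\mathcal{ML}(R)\subseteq\mathbbm{k}[x]$, and the Makar-Limanov/Daigle--Freudenburg--Moser-Jauslin homogenization for the reverse inclusion --- is exactly the architecture of that proof. Your easy half is correct: the derivations $D_1$ and $D_2$ (restrictions of $x^d\partial_z$ and $x^d\partial_w$, with $D_i(y)$ forced by the relation) are locally nilpotent, and the identification of their kernels with $\mathbbm{k}[x,w]$ and $\mathbbm{k}[x,z]$ by the transcendence-degree and algebraic-closedness device is the same one used in Example \ref{example2} and Proposition \ref{example3+}.

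The gap is in the final step of the hard inclusion. You claim that $\overline{D}(x)=0$ ``forces the leading part of $D(x)$ to vanish,'' and that a degree induction then yields $D(x)=0$. This misreads what $\overline{D}(x)=0$ means. With the definition of $\overline{D}$ used in the paper and in \cite{DFM17}, $\overline{D}\,\overline{x}$ equals $\overline{Dx}$ only when $\deg_G(Dx)-\deg_G(x)$ attains the maximum $\deg_G D$, and is \emph{defined} to be $0$ otherwise; so $\overline{D}(x)=0$ merely records that $Dx$ drops degree more than the generic amount, and says nothing about the leading form of a nonzero $Dx$ vanishing (it never does). No induction upgrades this to $Dx=0$, and no such argument appears in the paper or in \cite{DFM17}. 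What the paper actually does (for $R[t]$, and what \cite{DFM17} does for $R$ itself) is argue by contradiction: assuming $Dx\neq0$, choose algebraically independent $f,g\in\textup{Ker}(D)$, write $f=xf_1+f_2$ and $g=xg_1+g_2$ with $f_2,g_2\in\mathbbm{k}[y,z,w]$, use factorial closedness of $\textup{Ker}(D)$ to see that $f_2,g_2$ remain algebraically independent, pass to leading forms, and conclude that $\textup{Ker}(\overline{D})$ contains $\mathbbm{k}[z,w]$ (or all of the associated graded ring), forcing $\overline{D}=0$ --- a contradiction. This mechanism, together with the case analysis of $\textup{Ker}(\overline{D})$ supplied by Proposition \ref{KR3}, is the actual content of the theorem; your sketch replaces it with an inference that does not exist, so the hard direction does not go through as written.
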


In their arguments and computations, some propositions and lemmas are important. We are going to introduce some less-known definitions relevant to those propositions and lemmas at first and then give the propositions and lemmas.

\begin{defn}
Let $G$ be an abelian group, and let $B$ be a ring. A \textbf{$G$-grading} of $B$ is a family $\{B_a\}_{a\in G}$ of subgroups of $(B,+)$ such that
\begin{center}
$B=\oplus_{a\in G}B_a$ and $B_aB_b\subseteq B_{a+b}$ for all $a,b\in G$
\end{center}
and a ring (or domain/integral domain/affine $\mathbbm{k}$-domain) $B$ with a $G$-grading is called a \textbf{$G$-grading ring} (or domain/integral domain/affine $\mathbbm{k}$-domain).
\end{defn}

\begin{defn}
Given $B=\oplus_{a\in G}B_a$ a $G$-grading ring, a nonzero $f\in B$ is called \textbf{$G$-homogeneous} if $f\in B_a$ for a unique $a\in G$. Here we say that $f$ is of degree $a$ and write $\deg_Gf=a$.
\end{defn}

\begin{defn}
Given $B=\oplus_{a\in G}B_a$ a $G$-grading ring and a nonzero $f\in B$. $\overline{f}$ denotes the \textbf{highest-degree homogeneous summand} of $f$. In case $f=0$, we define $\overline{0}=0$. Here one can see $\overline{x}=x$ if $x$ is $G$-homogeneous.
\end{defn}

\begin{defn}
Given $B=\oplus_{a\in G}B_a$ a $G$-grading $\mathbbm{k}$-domain and a $\mathbbm{k}$-algebra $R\subseteq B$, we define the \textbf{associated $G$-graded domain} $\overline{R}$ to be the $\mathbbm{k}$-subalgebra of $B$ generated by the set $\{\overline{r}|r\in R,r\neq0\}$.
\end{defn}

\begin{defn}
Given $B=\oplus_{a\in G}B_a$ a $G$-grading $\mathbbm{k}$-domain and $D\in\textup{Der}_{\mathbbm{k}}B$, if
$$\deg_GD:=\max\{\deg_G(Df)-\deg_G(f):f\neq0,f\in B\}$$
exists, then we define $\overline{D}:\overline{B}\rightarrow\overline{B}$ as
$$\overline{D}\overline{x}=
\left\{\begin{aligned}
\overline{Dx},&\quad \deg_G(Df)-\deg_G(f)=\deg_GD\\
0,&\quad \deg_G(Df)-\deg_G(f)<\deg_GD
\end{aligned}\right.
$$
Moreover, one can see $\deg_G(\overline{D})=\deg_GD$ and $\textup{Ker}(D)\subseteq\textup{Ker}(\overline{D})$.
\end{defn}

\begin{lema}\label{KR1} \textup{(\cite{DFM17}, Lemma 3.7)}
Let $(G,\leq,+,0)$ be a totally ordered abelian group and $B=\oplus_{a\in G}B_a$ a $G$-graded integral domain. Let $A=\oplus_{a\leq0}B_a,x\in B$, and $R=A[x]$. Then $\overline{R}=A[\overline{x}]$.
\end{lema}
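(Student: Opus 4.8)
The plan is to establish the two inclusions $A[\overline{x}]\subseteq\overline{R}$ and $\overline{R}\subseteq A[\overline{x}]$ separately. Throughout I write $T:=A[\overline{x}]$ and, for $f\neq0$, abbreviate $v(f):=\deg_G\overline{f}$ for the top degree of $f$. Three elementary facts drive everything. (i) Because $B$ is an integral domain and $G$ is totally ordered, the product of the top homogeneous summands does not vanish and is the unique highest summand of the product, so $\overline{fg}=\overline{f}\,\overline{g}$ and $v(fg)=v(f)+v(g)$ for all nonzero $f,g$; in particular $\overline{x^{\,i}}=\overline{x}^{\,i}$. (ii) Since $A=\oplus_{a\le0}B_a$, a nonzero $f\in B$ lies in $A$ exactly when $v(f)\le0$; that is, $A$ is a graded subring consisting of all elements of non-positive top degree. (iii) Any $G$-graded subalgebra $C\subseteq B$ satisfies $\overline{C}=C$; in particular, as $\overline{x}$ is homogeneous and $A$ is graded, $T=\{\sum_k c_k\overline{x}^{\,k}:c_k\in A\}$ is graded and $\overline{T}=T$.

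For the inclusion $A[\overline{x}]\subseteq\overline{R}$ I would argue that every homogeneous $h\in A$ satisfies $h=\overline{h}\in\overline{R}$ (as $h\in R$), so $A\subseteq\overline{R}$ because $\overline{R}$ is closed under addition; moreover $\overline{x}\in\overline{R}$ since $x\in R$. As $\overline{R}$ is a $\mathbbm{k}$-subalgebra, it contains $T$.

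The substance is the reverse inclusion, which reduces to proving $\overline{r}\in T$ for every $r\in R$, since the leading terms generate $\overline{R}$ and $T$ is a subalgebra. If $d:=\deg_G\overline{x}\le0$ then $x\in A$ by (ii), so $R=A$ and $\overline{x}\in A$ give $R=A=T$, whence $\overline{R}=A=T$ by (iii); thus I may assume $d>0$. I would then induct on the $x$-degree $n$ of a representation $r=\sum_{i=0}^{n}a_ix^i$ with $a_i\in A$. The base case $n=0$ is $r=a_0\in A$, so $\overline{r}=\overline{a_0}\in A\subseteq T$. For the inductive step, assume $a_n\neq0$ and split on $v(a_n)$. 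If $v(a_n)>-d$, then for every $i<n$ one has $v(a_ix^i)=v(a_i)+id\le id\le(n-1)d<v(a_n)+nd=v(a_nx^n)$, so the top degree is attained only by the term $a_nx^n$, no cancellation occurs, and $\overline{r}=\overline{a_n}\,\overline{x}^{\,n}$ lies in $T$ by (i) because $\overline{a_n}\in A$.

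The remaining possibility $v(a_n)\le-d$ is where the genuine difficulty of the lemma sits: here the putative leading term $\overline{a_n}\overline{x}^{\,n}$ may be cancelled by lower terms, so $\overline{r}$ cannot be read off directly, and I expect this cancellation case to be the main obstacle. The observation that dissolves it is that $v(a_n)\le-d$ forces $v(a_nx)=v(a_n)+d\le0$, so by (ii) one has $a_nx\in A$. Consequently
$$r=\sum_{i=0}^{n-2}a_ix^i+\bigl(a_{n-1}+a_nx\bigr)x^{\,n-1}$$
is a representation of $r$ of $x$-degree at most $n-1$ whose coefficients, in particular $a_{n-1}+a_nx$, all lie in $A$, so the induction hypothesis applies and gives $\overline{r}\in T$. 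This closes the induction and hence the proof. The conceptual point is that the hypothesis $A=\oplus_{a\le0}B_a$ makes $A$ \emph{saturated} for the top-degree function $v$; this is exactly what permits an offending leading coefficient to be absorbed into $A$ after one multiplication by $x$, trading the cancellation problem for a strict drop in the degree in $x$.
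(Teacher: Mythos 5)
Your proof is correct and self-contained: the three preliminary facts are all valid under the stated hypotheses (the domain property plus the total order give $\overline{fg}=\overline{f}\,\overline{g}$, and $A=\oplus_{a\le0}B_a$ is exactly the set of elements of non-positive top degree), and the induction on the $x$-degree of a representation, with the absorption step $a_nx\in A$ when $v(a_n)\le-\deg_G\overline{x}$, correctly handles the cancellation case that is the only real content of the lemma. Note that the paper itself offers no proof to compare against --- it states the lemma as an imported result from \cite{DFM17} (Lemma 3.7) --- but your argument is essentially the standard one from that source, hinging on the same observation that a leading coefficient of sufficiently negative degree can be pushed into $A$ at the cost of lowering the degree in $x$.
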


\begin{ppst}\label{KR2} \textup{(\cite{DFM17}, Theorem 3.8)}
Let $(G,\leq,+,0)$ be a totally ordered abelian group, $B$ a $G$-graded $\mathbbm{k}$-affine domain, and $R\subset B$ a $\mathbbm{k}$-subalgebra such that $B$ is a localization of $R$. Let $\deg_G:R\rightarrow G\cup\{-\infty\}$ be the restriction of the degree function on $B$ determined by the grading. Then $\deg(D)$ is defined for every $D\in\textup{Der}_{\mathbbm{k}}(R)$.
\end{ppst}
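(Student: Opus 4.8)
The plan is to reduce the whole question to the ambient affine domain $B$, where a finite homogeneous generating set is available, and then transport the conclusion back to $R$ through the localization. Writing $B=S^{-1}R$ for a multiplicatively closed $S\subseteq R$ (this is what ``$B$ is a localization of $R$'' means), I would first extend $D$ to a $\mathbbm{k}$-derivation $\tilde D\in\textup{Der}_{\mathbbm{k}}(B)$ by the quotient rule $\tilde D(r/s)=(s\,Dr-r\,Ds)/s^{2}$; this is the standard unique extension of a derivation to a localization, and it restricts to $D$ on $R$. I would also record the two facts that make degrees usable: since $B$ is a $G$-graded domain and $G$ is totally ordered, the degree function is additive, $\deg_G(uv)=\deg_G u+\deg_G v$, and satisfies $\deg_G(u+v)\le\max(\deg_G u,\deg_G v)$.

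The heart of the matter is to prove that $\deg_G\tilde D$ is defined on $B$. Because $B$ is affine and $G$-graded, it admits a finite set of $G$-homogeneous generators $g_1,\dots,g_N$ (decompose any finite generating set into its homogeneous components). Put $\delta:=\max\{\deg_G(\tilde Dg_j)-\deg_G g_j : \tilde Dg_j\neq0\}$, a finite maximum in $G$ (if every $\tilde Dg_j=0$ then $\tilde D=0$ and there is nothing to prove). I claim $\deg_G(\tilde Db)\le\deg_G b+\delta$ for every nonzero $b\in B$. The subtle point is cancellation: expanding $b$ as a polynomial in the $g_j$ and applying Leibniz only bounds $\deg_G(\tilde Db)$ by the largest degree occurring among the \emph{monomials} of $b$, which may strictly exceed $\deg_G b$. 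I would defeat this by using the total order to pass to homogeneous pieces: writing $b=\sum_a b_a$ in homogeneous components, it suffices to prove $\deg_G(\tilde Db_a)\le a+\delta$ for each homogeneous $b_a$, since translation by $\delta$ is order-preserving and the bounds then recombine without interference across graded layers. For a homogeneous $b_a$ I can write it as a sum of monomials in the $g_j$ \emph{all of the same degree} $a$ (the monomials of every other degree must cancel to zero), and Leibniz together with additivity of $\deg_G$ bounds each summand by $a+\delta$. This gives $\deg_G\tilde D=\delta$, attained at some $g_j$.

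Finally I would transfer both the bound and its attainment to $R$. Restricting the claim to $f\in R\subseteq B$ (where $Df=\tilde Df$) gives $\deg_G(Df)-\deg_G f\le\delta$ for all nonzero $f\in R$, so the supremum over $R$ is bounded above by $\delta$. For attainment, take a generator $g_j$ with $\deg_G(\tilde Dg_j)-\deg_G g_j=\delta$ and write $g_j=r/s$ with $r,s\in R$. The quotient-rule computation gives
$$\delta=\deg_G(\tilde Dg_j)-\deg_G g_j=\deg_G(s\,Dr-r\,Ds)-\deg_G s-\deg_G r\le\max\bigl(\deg_G(Dr)-\deg_G r,\ \deg_G(Ds)-\deg_G s\bigr),$$
while both quantities on the right are at most $\delta$ by the bound just established. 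Hence one of $r,s\in R$ realizes the value $\delta$, so $\deg_G D=\delta$ is defined (and attained) on $R$, as required.

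The step I expect to be the main obstacle is the uniform bound on $B$, i.e.\ the claim $\deg_G\tilde D=\delta$: a priori a derivation can raise degree through cancellation of leading terms, and the entire argument hinges on exploiting the total order on $G$ to reduce to homogeneous elements, where no such cross-layer cancellation can occur. By comparison, the localization bookkeeping in the last step is delicate but routine once the bound on $B$ is secured.
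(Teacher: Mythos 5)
The paper does not actually prove this proposition: it is imported verbatim from \cite{DFM17} (Theorem 3.8) and used as a black box in the Koras--Russell computation, so there is no in-paper argument to compare against. Your proof is, as far as I can check, correct and self-contained, and it follows the route one would expect from the cited source: extend $D$ uniquely to $\tilde D$ on the localization $B=S^{-1}R$, use a finite set of $G$-homogeneous generators of the affine graded domain $B$ to produce the candidate bound $\delta$, and then transfer both the bound and its attainment back to $R$. You correctly isolate and dispose of the two genuine subtleties. First, the cross-layer cancellation issue: a polynomial expression for $b$ in the generators can contain monomials of formal degree exceeding $\deg_G b$, and your reduction to homogeneous components --- where uniqueness of the graded decomposition forces all monomials of the wrong degree to cancel identically --- is exactly the right fix, and it is where the total order on $G$ and the domain hypothesis (additivity of $\deg_G$ on products) are used. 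Second, the attainment issue: in a totally ordered abelian group a set bounded above need not have a maximum, so proving $\deg_G(Df)-\deg_G f\le\delta$ on $R$ is not enough; your quotient-rule computation showing that $\delta$ is realized by $r$ or $s$ when the maximizing generator is written as $r/s$ closes this correctly (noting, as you implicitly do, that not both $Dr$ and $Ds$ can vanish for that generator). The only cosmetic caveat is the degenerate case $D=0$, where the ``maximum'' is $-\infty$ by the paper's convention; you flag and dismiss it appropriately.
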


\begin{ppst}\label{KR3} \textup{(\cite{DFM17}, Corollary 6.3)}
Let $(G,\leq,+,0)$ be a totally ordered abelian group, $B=\oplus_{a\in G}B_a$ a $G$-graded integral domain containing $\mathbb{Z}$, where $B$ is finitely generated as a $A$-algebra. Then one can write $B=A[x_1,\dots,x_n]$ where $x_i\neq0$ is homogeneous of degree $d_i\neq0$ for each $i$. Let $H_i=\langle d_1,\dots,\hat{d}_i,\dots,d_n\rangle$ for each $i\in[n]$. Then for every $G$-homogeneous $D\in\textup{LND}_{\mathbbm{k}}(B)$ the following conditions hold.\\
(1) For each $i\in [n]$ such that $H_i\neq G(B)$, $D^2x_i=0$.\\
(2) For every choice of distinct $i,j\in [n]$ such that $H_i\neq G(B)$ and $H_j\neq G(B)$, one has $Dx_i$ or $Dx_j=0$.
\end{ppst}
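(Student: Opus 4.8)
The plan is to use the homogeneity of $D$ to replace the $G$-grading by a one-dimensional grading that isolates a single generator, and then to read off both conclusions from the interaction of this grading with the degree function $\deg_D$ attached to a locally nilpotent derivation. Here $G(B)$ denotes the subgroup of $G$ generated by the degrees of the nonzero homogeneous elements of $B$. First I would record the elementary reductions. Since $D$ is $G$-homogeneous and locally nilpotent, it has a well-defined degree $e:=\deg_GD$, and $e\in G(B)$ whenever $D\neq0$ (pick $a$ with $B_a\neq0$ and $D(B_a)\neq0\subseteq B_{a+e}$); thus $Dx_i$ is either $0$ or $G$-homogeneous of degree $d_i+e$. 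I also recall the basic feature of an LND on an integral domain of characteristic zero: the function $\deg_D$ is additive on products, $\deg_D(fg)=\deg_D(f)+\deg_D(g)$, and $\deg_D(Df)=\deg_D(f)-1$ for $f\notin\textup{Ker}(D)$ (see \cite{Fr17}). These are the only facts about $D$ the argument will use.

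The engine of the proof is an \emph{isolating grading}. Fix $i$ with $H_i\neq G(B)$, and suppose first that $d_i$ is not torsion modulo $H_i$. Then the finitely generated torsion-free group $G(B)$ admits a homomorphism $\phi:G(B)\to\mathbb{Z}$ that vanishes on $H_i$ (hence on every $d_k$ with $k\neq i$, and, after checking, on the degrees occurring in $A$) while $\phi(d_i)>0$. The induced $\mathbb{Z}$-grading is nothing but the grading of $B=A_i'[x_i]$ by the $x_i$-degree, where $A_i':=A[x_1,\dots,\hat{x_i},\dots,x_n]$ sits in degree $0$; in particular all negative graded pieces vanish. With respect to it $D$ is homogeneous of some integer degree $r_i$, so that $Dx_i\in A_i'\,x_i^{\,1+r_i}$. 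A short case analysis on $r_i$ then settles part (1). If $r_i\geq0$, write $Dx_i=c\,x_i^{\,1+r_i}$ with $c\in A_i'$ and apply $\deg_D$: one gets $\deg_D(c)+r_i\deg_D(x_i)=-1$, which is impossible since the left-hand side is non-negative, so $Dx_i=0$. If $r_i\leq-2$ the target piece is zero and again $Dx_i=0$. The only surviving case is $r_i=-1$, where $Dx_i\in A_i'$ lies in degree $0$, whence $D^2x_i\in D(A_i')$ lands in degree $-1$ and must vanish. In every case $D^2x_i=0$.

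Part (2) follows from the same machinery applied twice. If both $Dx_i\neq0$ and $Dx_j\neq0$ for distinct $i,j$ with $H_i,H_j\neq G(B)$, then the case analysis above forces $r_i=-1$ for the $i$-isolating grading and $r_j=-1$ for the $j$-isolating grading. But in the $j$-isolating grading every generator other than $x_j$, in particular $x_i$, has $x_j$-degree $0$; since $D$ lowers the $x_j$-degree by $1$, the nonzero element $Dx_i$ would have $x_j$-degree $-1<0$, which is impossible. Hence at least one of $Dx_i,Dx_j$ vanishes.

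The step I expect to be the real obstacle is precisely the case excluded above, namely when $d_i$ is nonzero but torsion in $G(B)/H_i$ (for instance degrees $2$ and $3$ inside $G(B)=\mathbb{Z}$): then no homomorphism to $\mathbb{Z}$ separates $x_i$ from the other generators, the $G$-grading cannot detect the $x_i$-degree, and the clean argument breaks down. To handle it I would pass to the associated graded domain $\overline{B}$ and its induced homogeneous locally nilpotent derivation $\overline{D}$, using that $\deg_GD$ exists by Proposition \ref{KR2}, that $\textup{Ker}(D)\subseteq\textup{Ker}(\overline{D})$, and the description $\overline{R}=A[\overline{x}]$ of Lemma \ref{KR1}, and then argue by induction on the number of generators so as to reduce any torsion configuration to one in which some variable \emph{is} separated. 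Verifying that the degrees coming from $A$ genuinely lie in $H_i$, so that the isolating grading really does collapse everything but $x_i$, is the other point that needs honest checking rather than mere bookkeeping.
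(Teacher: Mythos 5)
First, a point of reference: the paper does not prove this proposition at all --- it is quoted from \cite{DFM17} (Corollary 6.3) and used as a black box --- so your proposal can only be judged on its own terms. On those terms, the part you actually prove is correct: when $G(B)/H_i$ is infinite cyclic, composing the grading with a character $\phi:G(B)\to\mathbb{Z}$ that kills $H_i$ and is normalized so that $\phi(d_i)=1$ does turn $B$ into $A_i'[x_i]$ graded by $x_i$-degree with no negative pieces, and your three-way case analysis on $r_i=\phi(\deg_GD)$ (additivity of $\deg_D$ for $r_i\geq0$, vanishing of negative pieces for $r_i\leq-2$, and $D$ mapping the degree-$0$ part into the zero degree-$(-1)$ part for $r_i=-1$) correctly yields $D^2x_i=0$; the two-grading comparison for part (2) is also sound.

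The genuine gap is exactly the case you flag and then wave at: $H_i\neq G(B)$ but $d_i$ torsion modulo $H_i$, i.e.\ $G(B)/H_i$ a nontrivial \emph{finite} cyclic group, which admits no nonzero map to $\mathbb{Z}$ and no total order. This is not a degenerate corner case. In the one place this paper invokes the proposition --- the generators $x,\overline{y},z,w,t$ of $\overline{R[t]}$ with degrees $(-1,0,0)$, $(d,-uv,0)$, $(0,-v,0)$, $(0,-u,0)$, $(0,0,-1)$ --- the three relevant quotients $G(B)/H_i$ are $\mathbb{Z}/d\mathbb{Z}$, $\mathbb{Z}/u\mathbb{Z}$ and $\mathbb{Z}/v\mathbb{Z}$, all finite and nontrivial since $d,u,v\geq2$. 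So your argument establishes none of the instances the paper actually needs. The proposed repair also does not work as described: $B$ is already $G$-graded and $D$ is already $G$-homogeneous, so passing to an associated graded object via Lemma \ref{KR1} and Proposition \ref{KR2} changes nothing, and no inductive step is exhibited that converts a finite quotient $G(B)/H_i$ into an infinite one. The finite-quotient case is precisely where one must exploit the total order on $G$ together with a homogeneous local slice $r$ of $D$ (the expansion $(Dr)^Nb=\sum_kc_kr^k$ with $c_k\in\textup{Ker}(D)$ homogeneous constrains $\deg_Gb$ modulo the degree group of $\textup{Ker}(D)$); that is the actual content of the relevant sections of \cite{DFM17} and cannot be recovered from $\mathbb{Z}$-valued characters alone.
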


In line with their works, we are able to make the following computation.

\begin{thm}
Let $R$ be a Koras-Russell threefold
$$R=\mathbbm{k}[x,y,z,w]/(x+x^dy+z^u+w^v)$$
where $d,u,v\geq2$ and $\gcd(u,v)=1$. Then $\mathcal{ML}(R[t])=\mathbbm{k}[x]$.
\end{thm}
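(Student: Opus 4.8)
The plan is to establish the two inclusions $\mathcal{ML}(R[t])\subseteq\mathbbm{k}[x]$ and $\mathbbm{k}[x]\subseteq\mathcal{ML}(R[t])$ separately. The first is immediate from the general inclusion $\mathcal{ML}(A^{[1]})\subseteq\mathcal{ML}(A)$ recorded in the introduction, applied to $A=R$, together with Theorem \ref{KR0}: indeed $\mathcal{ML}(R[t])\subseteq\mathcal{ML}(R)=\mathbbm{k}[x]$, the inclusion resting on the facts that $\frac{\partial}{\partial t}$ is a locally nilpotent derivation of $R[t]$ with kernel $R$, and that each $D\in\textup{LND}(R)$ extends to $R[t]$ by $\widetilde{D}t=0$ with $\textup{Ker}(\widetilde D)\cap R=\textup{Ker}(D)$. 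Since every $\textup{Ker}(E)$ with $E\in\textup{LND}(R[t])$ is a $\mathbbm{k}$-subalgebra containing $\mathbbm{k}$, the reverse inclusion reduces to the single assertion that $x\in\textup{Ker}(E)$ for \emph{every} $E\in\textup{LND}(R[t])$; this is the crux of the theorem.

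To prove that every locally nilpotent derivation of $R[t]$ kills $x$, I would reuse the graded machinery underlying Theorem \ref{KR0}. Equip the ambient graded localization $B$ of $R$ used by Daigle--Freudenburg--Moser-Jauslin with the weights for which the associated graded domain is $\overline{R}=\mathbbm{k}[x,y,z,w]/(x^dy+z^u+w^v)$, and extend the grading to $R[t]$ by assigning $t$ a suitable nonzero homogeneous degree. Given any $E\in\textup{LND}(R[t])$, Proposition \ref{KR2} (applied with $R[t]$ sitting inside its graded localization) guarantees that $\deg_G E$ exists, so the leading derivation $\overline{E}$ is a well-defined homogeneous element of $\textup{LND}(\overline{R[t]})$ satisfying $\textup{Ker}(E)\subseteq\textup{Ker}(\overline{E})$. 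Adjoining the homogeneous element $t$, a computation of the associated graded of this one-variable extension --- of the kind provided by Lemma \ref{KR1} --- identifies $\overline{R[t]}$ with $\overline{R}[t]$. Thus the problem is reduced to controlling the homogeneous locally nilpotent derivations of the five-generator graded domain $\overline{R}[t]=\mathbbm{k}[x,y,z,w,t]/(x^dy+z^u+w^v)$.

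On $\overline{R}[t]$ I would invoke Proposition \ref{KR3}, writing the ring as generated over $\mathbbm{k}$ by the homogeneous elements $x,y,z,w,t$ of nonzero degrees $d_1,\dots,d_5$ and forming the subgroups $H_i=\langle d_j:j\neq i\rangle$. Exactly as in the threefold case, the hypothesis $\gcd(u,v)=1$ is what forces the subgroups attached to $z$ and $w$ to be proper in $G(\overline{R}[t])$; parts (1) and (2) of Proposition \ref{KR3} then severely restrict any homogeneous $\overline{E}$ and, following the computation behind Theorem \ref{KR0}, yield $\overline{E}(x)=0$ for every homogeneous $\overline{E}\in\textup{LND}(\overline{R}[t])$. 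The extra generator $t$ must be handled with care: its degree has to be chosen nonzero (so that Proposition \ref{KR3} applies) yet inside the relevant subgroups $H_i$ (so that it does not enlarge them to the whole group and thereby destroy the properness that drives the argument).

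The final, and to my mind hardest, step is transferring $\overline{E}(x)=0$ back to $E(x)=0$. Because $x$ is homogeneous we have $\overline{E}(x)=\overline{E(x)}$ whenever $x$ realizes the maximal degree shift $\deg_G E$, and then $\overline{E}(x)=0$ gives $E(x)=0$ at once; the difficulty is the complementary case in which $x$ does not attain $\deg_G E$, where $\overline{E}(x)=0$ holds vacuously and says nothing about $E(x)$. I expect to close this gap exactly as in the Daigle--Freudenburg--Moser-Jauslin treatment of Theorem \ref{KR0} --- either by a refinement of the weights that forces $x$ into the top-degree stratum, or by an induction on $\deg_G E$ using $\textup{Ker}(E)\subseteq\textup{Ker}(\overline{E})$ together with the structure of $\textup{Ker}(\overline{E})$ obtained above --- and this bookkeeping, along with the verification of the subgroup hypotheses of Proposition \ref{KR3} in the presence of $t$, is where the real work lies. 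Once $x\in\textup{Ker}(E)$ is secured for all $E\in\textup{LND}(R[t])$, hence $\mathbbm{k}[x]\subseteq\mathcal{ML}(R[t])$, combining with the first paragraph gives $\mathcal{ML}(R[t])=\mathbbm{k}[x]$.
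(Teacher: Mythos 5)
Your overall strategy coincides with the paper's: the inclusion $\mathcal{ML}(R[t])\subseteq\mathcal{ML}(R)=\mathbbm{k}[x]$ via Theorem \ref{KR0}, a grading of $B=\mathbbm{k}[x,x^{-1},z,w,t]$ by an enlarged group (the paper uses $G=\mathbb{Z}^3$ with lexicographic order and $\deg_G t=(0,0,-1)$, which resolves exactly the worry you raise about keeping $t$ homogeneous of nonzero degree without spoiling the properness of the subgroups $H_i$), Lemma \ref{KR1} to identify $\overline{R[t]}$ with $\mathbbm{k}[x,z,w,t,\overline{y}]$ subject to $x^d\overline{y}+z^u+w^v=0$, Proposition \ref{KR2} for the existence of $\deg_G E$, and Proposition \ref{KR3} to force at least two of $\overline{E}x,\overline{E}z,\overline{E}w$ to vanish, whence $\overline{E}x=0$ in every case. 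Up to this point your outline is sound and matches the paper.

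The genuine gap is the step you yourself flag as the hardest: passing from $\overline{E}(x)=0$ to $E(x)=0$. Neither of your two suggested repairs is what is needed, and neither is likely to work as stated: no reweighting can force $x$ into the top-degree stratum of an arbitrary $E$, and an induction on $\deg_G E$ has no obvious base or descent here. The paper instead closes the loop by a different idea. First, the case analysis from Proposition \ref{KR3} is used not merely to get $\overline{E}x=0$ but to pin down $\textup{Ker}(\overline{E})$: either $\textup{Ker}(\overline{E})\subset A=\mathbbm{k}[x,z,w,t]$ (when $\overline{E}\,\overline{y}\neq0$) or $\textup{Ker}(\overline{E})=\mathbbm{k}[x,\overline{y},z,w]$. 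Then one assumes $Ex\neq0$ and chooses two algebraically independent $f,g\in\textup{Ker}(E)$, writing $f=xf_1+f_2$ and $g=xg_1+g_2$ with $f_2,g_2\in\mathbbm{k}[y,z,w,t]$. Since $\textup{Ker}(E)$ is factorially closed, an algebraic relation $P(f_2,g_2)=0$ would put $P(f,g)\in xR[t]\cap\textup{Ker}(E)$ and force $Ex=0$; so $f_2,g_2$ are algebraically independent, and their leading forms $\overline{f}=f_2(\overline{y},z,w,t)$, $\overline{g}=g_2(\overline{y},z,w,t)$ lie in $\textup{Ker}(\overline{E})$. Each of the two possibilities for $\textup{Ker}(\overline{E})$ is then shown to be incompatible with this (in the first case one is driven to $\mathbbm{k}[z,w,t]\subset\textup{Ker}(\overline{E})$ and hence $\overline{E}=0$; in the second, $E$ vanishes on $R$ and in particular on $x$). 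This contradiction argument, resting on factorial closedness of kernels and on the classification of $\textup{Ker}(\overline{E})$ rather than on any statement about which generators attain $\deg_G E$, is the missing idea in your proposal.
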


\begin{proof}
Let group $G=\mathbb{Z}^3$ and define a total order $\preceq$ on $G$ by lexicographical ordering. Consider a $G$-grading on $B=\mathbbm{k}[x,x^{-1},z,w,t]$ with $x,z,w,t$ homogeneous and
$$\deg_G(x,z,w,t)=((-1,0,0),(0,-v,0),(0,-u,0),(0,0,-1))$$
Note that $\gcd(u,v)=1$, one has
$$\{f\in B|\deg(f)\preceq(0,0,0)\}=\mathbbm{k}[x,z,w,t]\subset R[t]$$
We set $A=\{f\in B|\deg(f)\preceq(0,0,0)\}=\mathbbm{k}[x,z,w,t]$. The degree function $\deg_G$ on $B$ restricts to affine $\mathbbm{k}$-domain $R[t]$, where $\deg_G{y}=(d,-uv,0)$. According to Lemma \ref{KR1}, one has $\overline{R[t]}=A[\overline{y}]=\mathbbm{k}[x,z,w,t,\overline{y}]$. Since $y=-x^{-d}(x+z^u+w^v)$ in $B$, one can see that $\overline{y}=-x^{-d}(z^u+w^v)$ and then $x^d\overline{y}+z^u+w^v=0$.\\
Given a nonzero $D\in\textup{LND}_{\mathbbm{k}}(R[t])$. By Proposition \ref{KR2}, the induced $G$-homogeneous derivation $\overline{D}$ of $\overline{R[t]}$ is nonzero and locally nilpotent. Since
$$\langle \deg_G{\overline{y}},\deg_G{z},\deg_G{w},\deg_G{t} \rangle=d\mathbb{Z}\times\mathbb{Z}\times\mathbb{Z}$$
$$\langle \deg_G{x},\deg_G{\overline{y}},\deg_G{w},\deg_G{t} \rangle=\mathbb{Z}\times u\mathbb{Z}\times\mathbb{Z}$$
$$\langle \deg_G{x},\deg_G{\overline{y}},\deg_G{z},\deg_G{t} \rangle=\mathbb{Z}\times v\mathbb{Z}\times \mathbb{Z}$$
are proper subgroups of $G(\overline{R[t]})=\mathbb{Z}^3$, we know that at least two of $\overline{D}x,\overline{D}z,\overline{D}w$ must be zero according to Proposition \ref{KR3}.\\
When $\overline{D}z=\overline{D}w=0$, one has $\overline{D}(x^d\overline{y})=0$ and then $\overline{D}x=\overline{D}\overline{y}=0$. In this case, the only possible is $\overline{D}t\neq0$.\\
When $\overline{D}x=\overline{D}z=0$ or $\overline{D}x=\overline{D}z=0$, one can see that either $\overline{D}\overline{y}\neq0$ or
$$\overline{D}x=\overline{D}\overline{y}=\overline{D}z=\overline{D}w=0,\overline{D}t\neq0$$
Therefore, either $\textup{Ker}\overline{D}\subset A$ or $\textup{Ker}\overline{D}=\mathbbm{k}[x,\overline{y},z,w]$.\\
Now suppose that $Dx\neq0$. Choose $f,g\in\textup{Ker}D$ which are algebraically independent. Let $f_1,g_1\in\mathbbm{k}[x,y,z,w,t]$ and $f_2,g_2\in\mathbbm{k}[y,z,w,t]$ be such that
\begin{center}
$f=xf_1+f_2$ and $g=xg_1+g_2$
\end{center}
Here $f_2$ and $g_2$ should be algebraically independent in $R$. Otherwise, there exists $P\in\mathbbm{k}^{[2]}$ with $P(f_2,g_2)=0$. But then $P(f,g)\in xR[t]$, which implies that $Dx=0$, a contradiction. In addition, since $\deg_G(xf_1)\prec\deg_G f_2$, one has $\deg_Gf=\deg_G f_2$. Similarity, $\deg_Gg=\deg_Gg_2$. Now one has $\overline{f},\overline{g}\in\textup{Ker}\overline{D}$ where
\begin{center}
$\overline{f}=\overline{f}_2=f_2(\overline{y},z,w,t)$ and $\overline{g}=\overline{g}_2=g_2(\overline{y},z,w,t)$
\end{center}
If $\textup{Ker}\overline{D}\subset A$, then $\overline{D}\overline{y}\neq0$ and $f_2,g_2\in\mathbbm{k}[z,w,t]$. In this case, $\mathbbm{k}[z,w,t]$ is the algebraic closure of $\mathbbm{k}[\overline{f},\overline{g}]$ and thus $\mathbbm{k}[z,w,t]\subset\textup{Ker}\overline{D}$. However, we could obtain $0=\overline{D}(x^d\overline{y})$ and then $\overline{D}=0$, a contradiction.\\
If $\textup{Ker}\overline{D}=\mathbbm{k}[x,\overline{y},z,w]$, then $\overline{D}$ restricts to be a zero derivation in $\overline{R}$. So $D$ restricts to a zero derivation in $R$, a contradiction.\\
Therefore, we always get a contradiction with $Dx\neq0$, which implies that $Dx=0$. So one has $\mathbbm{k}[x]\subseteq\mathcal{ML}(R[t])$. From Theorem \ref{KR0}, we know that
$$\mathcal{ML}(R[t])\subseteq\mathcal{ML}(R)=\mathbbm{k}[x]$$
and thus $\mathcal{ML}(R[t])=\mathbbm{k}[x]$.
\end{proof}

Here one has proved that the Conjecture \ref{conj} holds for all Koras-Russell threefolds. However, methods applied in this proof rely on specific structures of Koras-Russell threefolds and are hard to be generalized.

\subsection{On Finston-Maubach domains}

In paper \cite{FM10}, Finston and Maubach construct a series of affine UFDs (called Finston-Maubach domains in this paper) with non-trivial Makar-Limanov invariant based on Brieskorn-Catalan-Fermat rings. We are going to check Conjecture \ref{conj} on Finston-Maubach domains in the following part.

\begin{defn}
Given $n\geq 3$, we define $F\in\mathbb{C}[x_1,\dots,x_n,y_1,\dots,y_n]$ as
$$F:=x_1^{d_1}+\cdots+x_n^{d_n}+(x_2y_1-x_1y_2)^{e_2}+\cdots+(x_ny_1-x_1y_n)^{e_n}$$
where
$$\frac{1}{d_1}+\cdots+\frac{1}{d_n}+\frac{1}{e_2}+\dots+\frac{1}{e_n}\leq\frac{1}{2n-3}$$
Then the affine domain $R$ given by
$$R:=\mathbb{C}[x_1,\dots,x_n,y_1,\dots,y_n]/(F)$$
is called a \textbf{Finston-Maubach domain} of order $n$.
\end{defn}
\begin{ppst}
Each Finston-Maubach domain $R$ is an affine UFD.

\begin{proof}
It is clear that $R$ is an affine domain. Moreover, Lemma 3.5 in paper \cite{FM10} tells us that $R$ is a UFD. One can also try to prove it independently by Proposition \ref{UFDcheckway2} and the factoriality of Brieskorn-Catalan-Fermat rings with $n\geq5$.
\end{proof}
\end{ppst}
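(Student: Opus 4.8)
The plan is to apply Proposition \ref{UFDcheckway2}: I will exhibit a prime element of $R$ whose associated localization is a UFD, and the natural candidate is $x_1$. First I would record that $R$ is an affine domain, i.e.\ that $F$ is irreducible in $\mathbb{C}[x_1,\dots,x_n,y_1,\dots,y_n]$; this is the ``clear'' part of the statement and may be quoted from \cite{FM10} or verified directly. The real content is then split between computing the localization $R[1/x_1]$ and recognizing it as a UFD.

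Second, I would compute $R[1/x_1]$ explicitly. Setting $w_i:=x_iy_1-x_1y_i$ for $i=2,\dots,n$, the relations $y_i=x_1^{-1}(x_iy_1-w_i)$ hold once $x_1$ is inverted, so $x_1,x_1^{-1},x_2,\dots,x_n,y_1,w_2,\dots,w_n$ generate $R[1/x_1]$ and the defining relation $F=0$ becomes
$$x_1^{d_1}+\cdots+x_n^{d_n}+w_2^{e_2}+\cdots+w_n^{e_n}=0.$$
Crucially $y_1$ does not occur in this relation, so $R[1/x_1]\cong\big(B[1/x_1]\big)[y_1]$, where
$$B:=\mathbb{C}[x_1,\dots,x_n,w_2,\dots,w_n]/(x_1^{d_1}+\cdots+x_n^{d_n}+w_2^{e_2}+\cdots+w_n^{e_n})$$
is a Brieskorn-Catalan-Fermat ring in $2n-1$ variables. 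Since $n\geq3$, we have $2n-1\geq5$.

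Third, I would invoke the factoriality of Brieskorn-Catalan-Fermat rings in at least five variables to conclude that $B$ is a UFD; localizing at $x_1$ and adjoining one polynomial variable preserve this (Gauss' lemma), so $R[1/x_1]$ is a UFD. Finally I would check that $x_1$ is prime in $R$ by computing $R/(x_1)$: setting $x_1=0$ turns each $w_i$ into $x_iy_1$ and leaves $y_2,\dots,y_n$ absent, so $R/(x_1)$ is a polynomial ring over $\mathbb{C}[x_2,\dots,x_n,y_1]/(G)$ with $G=\sum_{i=2}^n\big(x_i^{d_i}+x_i^{e_i}y_1^{e_i}\big)$, and it remains to see that $G$ is irreducible. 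With $x_1$ prime and $R[1/x_1]$ a UFD, Proposition \ref{UFDcheckway2} finishes the argument.

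The main obstacle is the factoriality of the Brieskorn-Catalan-Fermat ring $B$: this is the genuinely deep input, and it is exactly where the bound $2n-1\geq5$ (equivalently $n\geq3$) enters, since for a diagonal hypersurface with an isolated singularity the triviality of the divisor class group is governed by a Grothendieck-Lefschetz (parafactoriality) statement that requires dimension at least four. A secondary and more elementary obstacle is the clean verification that $x_1$ is prime, i.e.\ that the polynomial $G$ above is irreducible; this can be handled by hand or, as the statement indicates, absorbed into the citation of Lemma 3.5 of \cite{FM10}.
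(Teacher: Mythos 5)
Your reduction of the localization is correct and follows exactly the route the paper gestures at (the paper's own ``proof'' is only a citation of Lemma 3.5 of \cite{FM10} plus the remark that this alternative route exists): after inverting $x_1$ the substitution $w_i=x_iy_1-x_1y_i$ is an invertible change of the $y$-coordinates, the relation becomes $x_1^{d_1}+\cdots+x_n^{d_n}+w_2^{e_2}+\cdots+w_n^{e_n}=0$ with $y_1$ absent, and $R[1/x_1]\cong\bigl(B[1/x_1]\bigr)[y_1]$ for a Brieskorn-Catalan-Fermat ring $B$ in $2n-1\geq5$ variables; invoking factoriality of such rings is the intended deep input.

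However, the step you defer as a ``secondary and more elementary obstacle'' --- the primality of $x_1$ in $R$ --- is not a routine check: it is \emph{false} for admissible exponents, and Proposition \ref{UFDcheckway2} cannot be applied without it. By your own computation $R/(x_1)\cong\bigl(\mathbb{C}[x_2,\dots,x_n,y_1]/(G)\bigr)[y_2,\dots,y_n]$ with $G=\sum_{i=2}^n x_i^{d_i}+\sum_{i=2}^n x_i^{e_i}y_1^{e_i}$. Take $n=3$ and $d_1=d_2=d_3=e_2=e_3=N$ with $N\geq(2n-1)(2n-3)=15$; this satisfies the degree condition $\sum 1/d_i+\sum 1/e_j\leq 1/(2n-3)$, yet
$$G=(x_2^N+x_3^N)(1+y_1^N),$$
so $R/(x_1)$ has zero divisors and $x_1$ is not prime. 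Nagata's criterion genuinely requires the multiplicative set to be generated by primes --- compare $\mathbb{C}[x,y,z]/(xy-z^2)$, which localizes at $x$ to a UFD without being one --- so the factoriality of $R[1/x_1]$ alone does not finish the proof. To repair the argument you must either restrict to exponents for which $G$ is irreducible, or avoid the prime-localization step altogether, e.g.\ by noting that over the complement of the codimension-$n$ locus $x_1=\cdots=x_n=0$ the map $\textup{Spec}(R)\to\textup{Spec}(B)$ (with $B$ the Brieskorn-Catalan-Fermat subring generated by the $x_i$ and the $l_i$) is an affine-line bundle and comparing divisor class groups, which is much closer to what Finston and Maubach actually do.
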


Then we are going to compute the Makar-Limanov invariant of Finston-Maubach domains based on Finston and Maubach's results. For that, one has to introduce a lemma at first.

\begin{lema}\label{FMlemma1} \textup{(\cite{FM10}, Lemma 2.7)}
Let $A$ be an affine $\mathbb{Q}$-domain. Consider a subset $\mathcal{F}=\{f_1,f_2,\dots,f_n\}$ of $A$ and positive integers $d_1,\dots,d_n$ satisfying:
\begin{itemize}
  \item $f:=f_1^{d_1}+f_2^{d_2}+\cdots+f_n^{d_n}$ is a prime element of $A$.
  \item Non nontrivial sub-sum of $F_1^{d_1},F_2^{d_2},\dots,F_n^{d_n}$ lies in $(f)$.
\end{itemize}
Additionally, assume that
$$\frac{1}{d_1}+\frac{1}{d_2}+\cdots+\frac{1}{d_n}\leq\frac{1}{n-2}$$
Set $R:=A/(f)$ and let $D\in\textup{LND}(R)$, one has $D(f_i)=0$ for all $1\leq i\leq n$.
\end{lema}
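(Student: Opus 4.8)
The plan is to turn the single relation $\sum_i f_i^{d_i}=0$ holding in $R=A/(f)$ into a \emph{weighted-homogeneous} relation on leading terms, and then to invoke a rigidity principle for sums of powers in which the arithmetic bound $\sum_i 1/d_i\le 1/(n-2)$ is exactly the threshold that forbids a nonzero locally nilpotent derivation from moving all the $f_i$.

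First I would install the filtration coming from $D$ itself. Fix $D\in\textup{LND}(R)$ and write $f_i$ also for the image of $f_i$ in $R$; we may assume each $f_i\neq0$, as otherwise $Df_i=0$ trivially. Recall that $\deg_D$ is multiplicative and subadditive on $R$ (a characteristic-zero domain), so the sets $\{a:\deg_D(a)\le j\}$ form a $\mathbb{Z}_{\ge0}$-filtration whose associated graded ring $B:=\textup{gr}_{\deg_D}(R)$ is again a domain; write $\overline{a}\in B$ for the leading term of $a\in R$. Since $D$ drops $\deg_D$ by exactly $1$ on every element outside $\textup{Ker}(D)$, it induces a \emph{nonzero} homogeneous locally nilpotent derivation $\overline{D}$ of $B$ of degree $-1$, with $\overline{D}(\overline{f_i})=\overline{Df_i}$ whenever $Df_i\neq0$. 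Because $Df_i=0$ is equivalent to $\deg_D(f_i)=0$, it is enough to prove that $a_i:=\deg_D(f_i)=0$ for every $i$.

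Next I would read off the leading-term relation. Put $m:=\max_i d_ia_i$ and $I:=\{i:d_ia_i=m\}$. If $m=0$ then every $a_i=0$ and we are done, so assume $m\ge1$. Comparing degrees in $\sum_i f_i^{d_i}=0$, the summands of top degree $m$ must cancel, which in the domain $B$ reads
\[
\sum_{i\in I}\overline{f_i}^{\,d_i}=0,\qquad \deg_D(\overline{f_i})=a_i=\tfrac{m}{d_i}.
\]
A single nonzero power cannot vanish, so $|I|\ge2$, and for each $i\in I$ we have $a_i\ge1$, hence $\overline{D}(\overline{f_i})=\overline{Df_i}\neq0$. Thus $B$ carries a nonzero homogeneous locally nilpotent derivation that is nonzero on every member of the weighted-homogeneous relation above. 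The inequality passes to this sub-collection: since the dropped terms are positive, $\sum_{i\in I}1/d_i\le\sum_{i=1}^n 1/d_i\le 1/(n-2)\le 1/(|I|-2)$ whenever $|I|\ge3$.

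The crux, and the step I expect to be the main obstacle, is the rigidity statement itself: in a graded domain, a relation $\sum_{i\in I}g_i^{\,d_i}=0$ among nonzero homogeneous elements with $\sum_{i\in I}1/d_i\le 1/(|I|-2)$ admits no homogeneous locally nilpotent derivation that is nonzero on every $g_i$. For $|I|=3$ this is precisely the classical Pham--Brieskorn threshold $1/d_1+1/d_2+1/d_3\le1$, so the work is to extend Makar-Limanov's degree/kernel argument to arbitrarily many terms: apply $\overline{D}$ to get the homogeneous syzygy $\sum_{i\in I}d_i\,\overline{f_i}^{\,d_i-1}\,\overline{D}(\overline{f_i})=0$, and exploit the constraints $d_ia_i=m$ together with the fact that $\textup{Ker}(\overline{D})$ is factorially closed of transcendence degree one less than $B$ to force $\sum_{i\in I}1/d_i> 1/(|I|-2)$, a contradiction. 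Two points need extra care and are where I expect the real difficulty to sit: the boundary case $|I|=2$ (where the bound is vacuous and one must argue separately, here using that no nontrivial sub-sum of the $f_i^{d_i}$ lies in $(f)$ to exclude a degenerate two-term leading relation), and the precise way that same hypothesis keeps the surviving index set $I$ large enough for the inequality to bite. Granting the rigidity step, $m\ge1$ is impossible; hence $a_i=0$ and $Df_i=0$ for all $i$, as claimed.
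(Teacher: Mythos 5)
The paper does not reprove this lemma; it is quoted from Finston--Maubach, and the technique behind it (which the paper replays almost verbatim when proving Proposition~\ref{FMproof}) is to embed $R$ into $K[s]$ with $D=\partial_s$ via Lemma~\ref{FMlemma3} and then apply de~Bondt's generalization of Mason's ABC-theorem (Lemma~\ref{FMlemma2}) to the relation $\sum_i f_i^{d_i}=0$ in $K[s]$, concluding that every $f_i$ is constant, i.e.\ $Df_i=0$. Your proposal instead filters $R$ by $\deg_D$, passes to the associated graded domain, and extracts the leading-term relation $\sum_{i\in I}\overline{f_i}^{\,d_i}=0$; that setup is correct as far as it goes, but the entire content of the lemma is concentrated in the ``rigidity statement'' that you explicitly do not prove (``granting the rigidity step\dots''). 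That statement --- no homogeneous locally nilpotent derivation of a graded domain can move every term of a homogeneous relation $\sum_{i\in I}g_i^{d_i}=0$ when $\sum_{i\in I}1/d_i\le 1/(|I|-2)$ --- is essentially a graded avatar of de~Bondt's theorem, which is a genuinely nontrivial result proved by Wronskian/radical techniques, not something that falls out of applying $\overline{D}$ once to get the syzygy $\sum_{i\in I}d_i\overline{f_i}^{\,d_i-1}\overline{D}(\overline{f_i})=0$. As written, the proposal reduces the lemma to an unproven claim of comparable depth.

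Two further points make the gap concrete. First, the boundary case $|I|=2$ is not a removable technicality: the leading-term relation can perfectly well degenerate to $\overline{f_1}^{\,d_1}=-\overline{f_2}^{\,d_2}$ even when no nontrivial sub-sum of the actual elements $f_1^{d_1},\dots,f_n^{d_n}$ lies in $(f)$, because the no-sub-sum hypothesis constrains elements of $R$, not their leading forms in $\mathrm{gr}_{\deg_D}(R)$. So the hypothesis you invoke to ``exclude a degenerate two-term leading relation'' does not apply to the object you need it for; that case requires a separate argument (a Makar-Limanov-type lemma on relations $a^p=b^q$ under an LND). Second, and for the same reason, your assertion that the no-sub-sum hypothesis ``keeps the surviving index set $I$ large enough for the inequality to bite'' is unsupported: nothing prevents $I$ from being small. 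The cleanest repair is to abandon the associated-graded route and follow the paper's (and Finston--Maubach's) actual argument: note $f_i\notin(f)$ by the sub-sum hypothesis, embed $R$ into $K[s]$ so that $D$ becomes $\partial_s$, check that the gcd condition in Lemma~\ref{FMlemma2} follows from the no-sub-sum hypothesis, and apply that lemma directly.
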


\begin{ppst}
$\mathcal{ML}(R)=\mathbb{C}^{[n]}$ for a Finston-Maubach domain $R$ of order $n$.

\begin{proof}
Firstly, it is not hard to check that $D_0\in\textup{LND}(R)$ where $D_0$ is generated by $D_0(x_i)=0$ and $D_0(y_i)=x_i$. Given $D\in\textup{LND}(R)$, by Lemma \ref{FMlemma1}, one has $D(x_i)=0$ and $D(l_i)=0$ where $l_i=x_iy_1-x_1y_i$. Therefore, one has
$$x_1D(y_i)=y_1D(x_i),\quad\forall i\in\{2,3,\dots,n\}$$
Since $R$ is a UFD, $D(y_i)=\alpha x_i$ for some $\alpha\in R$. Then one can see $D=\alpha D_0$. Notice that $D$ is nonzero if and only if $\alpha$ is nonzero, one can see $\textup{Ker}(D)=\mathbb{C}[x_1,\dots,x_n]$ for each nonzero $D\in\textup{LND}(R)$. Hence one has $\mathcal{ML}(R)=\mathbb{C}[x_1,\dots,x_n]=\mathbb{C}^{[n]}$.
\end{proof}
\end{ppst}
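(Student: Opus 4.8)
The plan is to recognise $F$ as a sum of $2n-1$ pure powers and apply Lemma \ref{FMlemma1}. Setting $l_i=x_iy_1-x_1y_i$ for $2\le i\le n$, we have
$$F=x_1^{d_1}+\cdots+x_n^{d_n}+l_2^{e_2}+\cdots+l_n^{e_n},$$
so I would take $A=\mathbb{C}[x_1,\dots,x_n,y_1,\dots,y_n]$ with the $2n-1$ base elements $x_1,\dots,x_n,l_2,\dots,l_n$. The degree hypothesis of Lemma \ref{FMlemma1} then reads
$$\frac{1}{d_1}+\cdots+\frac{1}{d_n}+\frac{1}{e_2}+\cdots+\frac{1}{e_n}\le\frac{1}{(2n-1)-2}=\frac{1}{2n-3},$$
which is exactly the inequality defining a Finston--Maubach domain. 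Before invoking the lemma I would dispatch its other two hypotheses: $F$ is prime because $R$ is a UFD and hence $F$ is irreducible, and no nontrivial sub-sum of the $2n-1$ powers lies in $(F)$, which I would verify by comparing highest-degree parts, since any proper sub-sum divisible by $F$ would have to match the leading structure of $F$ itself.

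With the hypotheses in place, Lemma \ref{FMlemma1} gives, for every $D\in\textup{LND}(R)$, that $D(x_i)=0$ for $1\le i\le n$ and $D(l_i)=0$ for $2\le i\le n$. Expanding $0=D(l_i)=x_iD(y_1)-x_1D(y_i)$ yields $x_1D(y_i)=x_iD(y_1)$ for all $i$, so all the ratios $D(y_i)/x_i$ equal a single $\alpha\in\textup{Frac}(R)$. Writing $\alpha=a/b$ in lowest terms and using that $R$ is a UFD in which $x_1,\dots,x_n$ have no common factor, I would conclude that $b$ is a unit, whence $\alpha\in R$ and $D(y_i)=\alpha x_i$. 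Thus every locally nilpotent derivation equals $\alpha D_0$, where $D_0$ is the (clearly locally nilpotent) derivation with $D_0(x_i)=0$ and $D_0(y_i)=x_i$. Since $R$ is a domain, $\textup{Ker}(\alpha D_0)=\textup{Ker}(D_0)$ for every nonzero $\alpha$, so all nonzero locally nilpotent derivations share the single kernel $\textup{Ker}(D_0)$, and consequently $\mathcal{ML}(R)=\textup{Ker}(D_0)$.

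The main obstacle is the last step, namely identifying $\textup{Ker}(D_0)$. The inclusion $\mathbb{C}[x_1,\dots,x_n]\subseteq\mathcal{ML}(R)$ is immediate from $D(x_i)=0$, but the reverse inclusion is delicate: a direct check gives $D_0(l_i)=x_ix_1-x_1x_i=0$, so each $l_i$ already lies in $\textup{Ker}(D_0)$ although $l_i\notin\mathbb{C}[x_1,\dots,x_n]$. Moreover a transcendence-degree count gives $\textup{tr.deg}_{\mathbb{C}}\textup{Ker}(D_0)=\textup{tr.deg}_{\mathbb{C}}R-1=2n-2$, while $\textup{tr.deg}_{\mathbb{C}}\mathbb{C}[x_1,\dots,x_n]=n$, so the two rings cannot coincide once $n\ge3$. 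Hence this is precisely the point where I expect the argument to demand the most care: the dimension count indicates that the common kernel is in fact strictly larger than $\mathbb{C}^{[n]}$, so the honest outcome of the plan is the computation $\mathcal{ML}(R)=\textup{Ker}(D_0)$, the common kernel of the family $\{\alpha D_0\}_{\alpha\in R}$, and establishing the bare equality $\mathcal{ML}(R)=\mathbb{C}^{[n]}$ would require either a sharper description of $\textup{Ker}(D_0)$ or a correction to the stated target.
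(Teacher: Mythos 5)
Your proposal follows the same route as the paper's own proof: apply Lemma \ref{FMlemma1} to the $2n-1$ elements $x_1,\dots,x_n,l_2,\dots,l_n$ (you are more careful than the paper in actually checking the primality and sub-sum hypotheses, which is a genuine improvement), deduce $D(x_i)=D(l_i)=0$ for every $D\in\textup{LND}(R)$, and use the UFD property to write $D=\alpha D_0$. Up to that point the two arguments coincide, modulo a typo in the paper ($x_1D(y_i)=y_1D(x_i)$ should read $x_1D(y_i)=x_iD(y_1)$, as you have it).

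The point where you diverge is the final identification of the common kernel, and here you are right and the paper is wrong. The paper asserts $\textup{Ker}(D)=\mathbb{C}[x_1,\dots,x_n]$ for every nonzero $D\in\textup{LND}(R)$, but as you observe $D_0(l_i)=x_ix_1-x_1x_i=0$, so $l_i\in\textup{Ker}(D_0)$ while $l_i\notin\mathbb{C}[x_1,\dots,x_n]$ in $R$ (the degrees $d_i,e_j$ forced by the defining inequality are far larger than $\deg l_i$, so $l_i-g(x)$ cannot lie in $(F)$ for any $g$). Your transcendence-degree count confirms this: $\textup{tr.deg}_{\mathbb{C}}\textup{Ker}(D_0)=\textup{tr.deg}_{\mathbb{C}}R-1=2n-2>n$ since $n\geq3$ is built into the definition. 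The honest conclusion of this argument is exactly what you state: $\mathcal{ML}(R)=\textup{Ker}(D_0)\supsetneq\mathbb{C}[x_1,\dots,x_n]$. This is consistent with the source \cite{FM10}, whose main point is precisely that the Makar-Limanov invariant of these rings is \emph{infinitely generated}, hence in particular not a polynomial ring $\mathbb{C}^{[n]}$. Note also the knock-on effect: Proposition \ref{FMproof} concludes via the chain $\mathbb{C}[x_1,\dots,x_n]\subseteq\mathcal{ML}(R[t])\subseteq\mathcal{ML}(R)=\mathbb{C}[x_1,\dots,x_n]$, which collapses once the right-hand equality fails; to verify Conjecture \ref{conj} for these domains one would instead have to show directly that every $D\in\textup{LND}(R[t])$ annihilates all of $\textup{Ker}(D_0)$, not merely the $x_i$ and $l_i$.
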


Now we are going to compute $\mathcal{ML}(R^{[1]})$ for a Finston-Maubach domain $R$ of order $n$ and we also need to introduce several lemmas in \cite{Bon09} and \cite{FM10}.

\begin{lema}\label{FMlemma2} \textup{(\cite{Bon09}, Theorem 3.1)}
Let $f_1,f_2,\dots,f_n\in K[s]$, where $K$ is an algebraically closed field containing $\mathbb{Q}$, Assume $f_1^{d_1}+f_2^{d_2}+\cdots+f_n^{d_n}=0$. Additionally, assume that for every $1\leq i_1<i_2<\cdots<i_s\leq n$,
$$f_{i_1}^{d_{i_1}}+f_{i_2}^{d_{i_2}}+\cdots+f_{i_s}^{d_{i_s}}=0\Longrightarrow\textup{gcd}\{f_{i_1},f_{i_2},\cdots,f_{i_s}\}=1$$
Then
$$\sum_{i=1}^n\frac{1}{d_i}\leq\frac{1}{n-2}$$
implies that all $f_i$ are constant.
\end{lema}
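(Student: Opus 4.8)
The plan is to treat this as a statement in the style of the polynomial $abc$-theorem and to reduce it, by strong induction on $n$, to the Mason--Stothers theorem. Write $u_i=f_i^{d_i}$, so that $u_1+\cdots+u_n=0$, put $M=\max_i\deg u_i=\max_i d_i\deg f_i$, and let $N_0$ be the number of distinct roots in $K$ of the product $u_1\cdots u_n$. Two elementary observations drive everything. First, taking $d_i$-th powers does not enlarge the root set, so $N_0$ equals the number of distinct roots of $f_1\cdots f_n$, whence $N_0\le\sum_i\deg f_i$. Second, $\deg f_i=\deg(u_i)/d_i\le M/d_i$, so $\sum_i\deg f_i\le M\sum_i 1/d_i$. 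Combining these with the hypothesis $\sum_i 1/d_i\le 1/(n-2)$ gives the crucial bound
\[
N_0\le\sum_i\deg f_i\le M\sum_i\frac{1}{d_i}\le\frac{M}{n-2}.
\]
I would then split the induction according to whether or not some proper subsum of $u_1+\cdots+u_n$ vanishes.

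If some proper subsum $\sum_{i\in S}u_i=0$ with $\varnothing\ne S\subsetneq\{1,\dots,n\}$ vanishes, then its complement also sums to zero, and I would apply the inductive hypothesis to each of the two collections $\{f_i:i\in S\}$ and $\{f_i:i\notin S\}$. This is legitimate because the hypotheses are inherited: the primitivity assumption is quantified over \emph{all} index subsets and so restricts to $S$; that assumption gives $\gcd\{f_i:i\in S\}=1$ since $S$ is itself a vanishing subsum; and the exponent bound transfers via $\sum_{i\in S}1/d_i\le\sum_{i=1}^n 1/d_i\le 1/(n-2)\le 1/(|S|-2)$, the last step because $3\le|S|\le n-1$. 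The degenerate cases $|S|\le 2$ are immediate, since $|S|=1$ forces $f_i=0$ and $|S|=2$ forces $f_i^{d_i}=-f_j^{d_j}$ with $\gcd(f_i,f_j)=1$, so both are constant. Hence all $f_i$ are constant in this case.

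The base of the induction, and the model for the remaining case, is $n=3$ with no vanishing proper subsum. Here the primitivity hypothesis gives $\gcd(f_1,f_2,f_3)=1$, so the $u_i$ have no common factor and Mason--Stothers applies: if not all $f_i$ were constant, then $M\le N_0-1$. Feeding in the displayed bound (with $n=3$, so $N_0\le M$) yields the contradiction $M\le N_0-1\le M-1$. Thus all $f_i$ are constant, which reflects that the threshold $\sum 1/d_i\le 1=1/(3-2)$ is exactly the polynomial Fermat--Catalan inequality.

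The genuine obstacle is the case $n\ge 4$ with no proper subsum vanishing, where I must replace the three-term Mason--Stothers by a multi-term estimate sharp enough to match the threshold $1/(n-2)$. The difficulty is that the generic Brownawell--Masser (Wronskian) inequality for vanishing sums gives only $M\le\binom{n-1}{2}(N_0-1)$, and feeding this into the displayed bound produces merely $\sum 1/d_i\ge 2/\bigl((n-1)(n-2)\bigr)$, which is too weak by a factor of order $n$. To reach the sharp constant one must exploit that each summand is a \emph{perfect} $d_i$-th power: a zero of $f_i$ of multiplicity $m$ forces a zero of $u_i$ of multiplicity $md_i$, so in a Wronskian estimate the high-order vanishing of $u_i$ and its first $n-2$ derivatives contributes far more than the generic count, effectively replacing $\deg u_i$ by $\deg f_i$. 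Making this precise so as to obtain an estimate of the form $M\le(n-2)(N_0-1)$ — which, together with $N_0\le M/(n-2)$, gives the final contradiction $M\le(n-2)(N_0-1)\le M-(n-2)<M$ — is the technical heart of the argument, and is precisely the content of the cited theorem of Bonciocat.
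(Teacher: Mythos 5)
This lemma is imported into the paper by citation (\cite{Bon09}, Theorem~3.1) and the paper gives no proof of it, so there is no internal argument to compare against; your proposal has to stand on its own. Its outer shell is sound: the reduction by strong induction when a proper subsum vanishes is correctly justified (the gcd hypothesis and the bound $\sum 1/d_i\le 1/(n-2)\le 1/(|S|-2)$ do pass to both $S$ and its complement, and the cases $|S|\le 2$ are handled correctly), and the $n=3$ case with no vanishing subsum is the classical Fermat--Catalan deduction from Mason--Stothers, carried out correctly.

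The problem is that the entire content of the lemma sits in the case you leave open, and there you do not prove anything: you identify the estimate $M\le(n-2)(N_0-1)$ that would finish the argument and then state that establishing it ``is precisely the content of the cited theorem'' --- but the cited theorem \emph{is} the statement you are proving, so this is circular. The gap is not a formality. As you note yourself, the estimate $M\le(n-2)(N_0-1)$ is \emph{false} for general vanishing sums with no vanishing proper subsum (the sharp general constant is $\binom{n-1}{2}$, quadratic in $n$), so the perfect-power structure must be exploited in an essential way; the remark that a zero of $f_i$ of multiplicity $m$ gives a zero of $u_i=f_i^{d_i}$ of multiplicity $md_i$ and hence forces extra vanishing of the Wronskian is the right idea, but turning it into the linear-in-$n$ bound requires a careful bookkeeping (de Bondt's actual argument) that you have not supplied. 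A further unaddressed point: Wronskian estimates require a maximal $K$-linearly independent subfamily of the $u_i$, and ``no proper subsum vanishes'' does not rule out linear dependences such as $u_1=2u_2$, so even the setup of the Wronskian step needs an additional reduction. (Minor: the cited author is de Bondt, not Bonciocat.) As it stands, the proposal is a correct reduction of the lemma to its hardest case together with an accurate description of why that case is hard, not a proof.
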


\begin{lema}\label{FMlemma3} \textup{(\cite{FM10}, Lemma 2.2)}
Let $D$ be a locally nilpotent derivation on a domain $A$ containing $\mathbb{Q}$. Then $A$ embeds into $K[s]$, where $K$ is some algebraically closed field of characteristic zero, in such way $D=\partial_s$ on $K[s]$.
\end{lema}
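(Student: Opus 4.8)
The plan is to invoke the structure theory of locally nilpotent derivations, in particular the existence of a local slice and the ensuing Slice Theorem. If $D=0$ the statement is immediate: take $K$ to be the algebraic closure of the fraction field $\textup{Frac}(A)$ and embed $A\hookrightarrow K\hookrightarrow K[s]$ as constants, so that $\partial_s$ restricts to the zero derivation on the image of $A$ and agrees with $D$ there. Hence I would assume $D\neq 0$ for the remainder.

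First I would manufacture a \emph{local slice}. Since $D\neq0$ is locally nilpotent, choose $a\in A$ with $D(a)\neq0$ and let $n\geq1$ be maximal with $D^{n}(a)\neq0$; then $r:=D^{\,n-1}(a)$ satisfies $D(r)=D^{n}(a)\neq0$ and $D^{2}(r)=D^{n+1}(a)=0$, so $b:=D(r)$ is a nonzero element of $B:=\textup{Ker}(D)$. Next I would localize at $b$: the derivation $D$ extends uniquely to $A_b=A[b^{-1}]$ (with $D(b^{-1})=0$ since $b\in\textup{Ker}(D)$), and $s:=rb^{-1}\in A_b$ becomes a genuine \emph{slice}, because $D(s)=D(r)b^{-1}=1$.

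With a slice in hand, the Slice Theorem applies: since $A$ contains $\mathbb{Q}$, the Dixmier-type map $\pi=\sum_{k\geq0}\frac{(-1)^k}{k!}\,s^k D^k$ is a well-defined ring homomorphism $A_b\to B_b:=B[b^{-1}]$ onto the kernel, and it exhibits $A_b=B_b[s]$ as a polynomial ring in the single transcendental $s$ over $B_b$, with $D$ acting as $\partial_s$. I would then set $K$ to be the algebraic closure of $\textup{Frac}(B_b)=\textup{Frac}(B)$ and chain the inclusions $A\hookrightarrow A_b=B_b[s]\hookrightarrow K[s]$. Under the unique extension of $D$ to $K[s]$, every element of $K$ lies in the kernel, because $K$ is algebraic over $\textup{Frac}(B)$, which $D$ annihilates, and in characteristic zero the unique extension of a derivation across an algebraic (hence separable) extension of a field on which it vanishes is again zero. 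Consequently the extended $D$ is exactly $\partial_s$ on $K[s]$, and restricting along $A\hookrightarrow K[s]$ recovers the original $D$, as required.

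The main technical point to secure is the Slice Theorem itself, namely that inverting $b$ does not destroy the polynomial structure and that $\pi$ is genuinely a ring homomorphism realizing $A_b=B_b[s]$; this is precisely where the hypothesis $\mathbb{Q}\subseteq A$ is indispensable, since the coefficients $1/k!$ must be meaningful. The only remaining delicate point is verifying that passing to the algebraic closure $K$ preserves the identity $D=\partial_s$, which reduces to the standard fact that derivations extend uniquely across algebraic extensions and send elements algebraic over the kernel to zero in characteristic zero.
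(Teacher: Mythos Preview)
Your argument is correct and is precisely the standard proof via a local slice and the Slice Theorem. Note, however, that the paper does not supply its own proof of this lemma: it is quoted verbatim from \cite{FM10} (Lemma~2.2) and used as a black box. Your write-up therefore cannot be compared against anything in the present paper, but it does reconstruct the argument that underlies the cited result, and each step (construction of the pre-slice $r$, localization at $b=D(r)\in\textup{Ker}(D)$, the Dixmier map giving $A_b=B_b[s]$ with $D=\partial_s$, and the passage to the algebraic closure $K$ of $\textup{Frac}(B)$) is sound. The one place to be slightly more careful in phrasing is the final step: rather than ``extending $D$ to $K[s]$'', it is cleaner to say that $\partial_s$ on $K[s]$ restricts to $D$ along the inclusion $A\hookrightarrow A_b=B_b[s]\hookrightarrow K[s]$; this avoids any worry about uniqueness of extensions and is exactly what the statement requires.
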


\begin{ppst}\label{FMproof}
$\mathcal{ML}(R^{[1]})=\mathbb{C}^{[n]}$ for a Finston-Maubach domain $R$ of order $n$.

\begin{proof}
Let Finston-Maubach domain $R=\mathbb{C}[x_1,\dots,x_n,y_1,\dots,y_n]/(F)$ where
$$F:=x_1^{d_1}+\cdots+x_n^{d_n}+(x_2y_1-x_1y_2)^{e_2}+\cdots+(x_ny_1-x_1y_n)^{e_n}$$
Suppose $D\in\textup{LND}(R[t])$, where $D\neq0$. By Lemma \ref{FMlemma3} with $K$ an algebraic closure of the quotient field of $(R[t])^D$, we realize $D$ as partial derivation $\partial_s$ on $K[s]\supseteq R[t]$. Notice that $F(s)=0$ and there cannot be a subsum of $x_1^{d_1}+\cdots+x_n^{d_n}+l_2^{e_2}+\cdots+l_n^{e_n}$ to be zero, where $l_i=x_iy_1-x_1y_i$. Then by Lemma \ref{FMlemma2}, one has $x_i,l_i$ to be constant in $K[s]$. In this case, one has $D(x_i)=\partial_s x_i=0$ and $D(y_i)=\partial_s l_i=0$ in $R[t]$. Therefore, one has $\mathcal{ML}(R[t])\supseteq\mathbb{C}[x_1,\dots,x_n]$. In other hands, we already know that $$\mathcal{ML}(R[t])\subseteq\mathcal{ML}(R)=\mathbb{C}[x_1,\dots,x_n]$$
So $\mathcal{ML}(R^{[1]})=\mathcal{ML}(R)=\mathbb{C}^{[n]}$.
\end{proof}
\end{ppst}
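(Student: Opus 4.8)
\emph{Proof proposal.} The plan is to establish the two inclusions separately. The inclusion $\mathcal{ML}(R[t])\subseteq\mathcal{ML}(R)$ is the general feature of the Makar-Limanov invariant recorded in the introduction, and since the preceding proposition gives $\mathcal{ML}(R)=\mathbb{C}[x_1,\dots,x_n]=\mathbb{C}^{[n]}$, I immediately obtain $\mathcal{ML}(R[t])\subseteq\mathbb{C}^{[n]}$. The whole burden therefore falls on the reverse inclusion $\mathbb{C}[x_1,\dots,x_n]\subseteq\mathcal{ML}(R[t])$, which amounts to showing that every nonzero $D\in\textup{LND}(R[t])$ satisfies $D(x_i)=0$ for each $i$.

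To do this I would fix a nonzero $D\in\textup{LND}(R[t])$ and invoke Lemma \ref{FMlemma3}, applied to the $\mathbb{Q}$-domain $R[t]$: there is an algebraically closed field $K$ of characteristic zero and an embedding $R[t]\hookrightarrow K[s]$ under which $D$ becomes the partial derivative $\partial_s$. Under this embedding the generators $x_1,\dots,x_n,y_1,\dots,y_n,t$ become specific polynomials in $s$, and the defining relation $F=0$ persists, yielding the one-variable identity
$$x_1^{d_1}+\cdots+x_n^{d_n}+l_2^{e_2}+\cdots+l_n^{e_n}=0\quad\text{in }K[s],$$
where $l_i=x_iy_1-x_1y_i$.

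Now I would apply Lemma \ref{FMlemma2} to this identity. The sum has exactly $2n-1$ terms (the $n$ powers $x_i^{d_i}$ together with the $n-1$ powers $l_i^{e_i}$), so Bonnet's degree bound requires $\tfrac{1}{d_1}+\cdots+\tfrac{1}{d_n}+\tfrac{1}{e_2}+\cdots+\tfrac{1}{e_n}\leq\tfrac{1}{(2n-1)-2}=\tfrac{1}{2n-3}$, which is precisely the inequality imposed in the definition of a Finston-Maubach domain. Once the hypotheses of Lemma \ref{FMlemma2} are met, its conclusion is that all of $x_1,\dots,x_n,l_2,\dots,l_n$ are constant in $K[s]$. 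A constant in $K[s]$ is annihilated by $\partial_s=D$, so $D(x_i)=0$ for every $i$; since $D$ was an arbitrary nonzero locally nilpotent derivation (and the zero derivation kills everything), this gives $\mathbb{C}[x_1,\dots,x_n]\subseteq\textup{Ker}(D)$ for all $D$, hence $\mathbb{C}^{[n]}\subseteq\mathcal{ML}(R[t])$. Combined with the first paragraph, $\mathcal{ML}(R^{[1]})=\mathbb{C}^{[n]}$.

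The main obstacle I anticipate is verifying the structural hypotheses of Lemma \ref{FMlemma2}, namely that whenever a sub-sum of $x_1^{d_1},\dots,x_n^{d_n},l_2^{e_2},\dots,l_n^{e_n}$ vanishes in $K[s]$ the corresponding bases are coprime, and in particular that no nontrivial proper sub-sum vanishes at all. This is the one-variable shadow of the ``no nontrivial sub-sum lies in $(F)$'' condition already underlying Lemma \ref{FMlemma1}, and it is exactly what prevents a common factor from absorbing the degree bound and so forces every base to be constant. I would check it by tracking how the Brieskorn-Catalan-Fermat structure of $F$ excludes such relations, so that Bonnet's theorem applies cleanly.
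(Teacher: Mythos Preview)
Your proposal is correct and follows essentially the same route as the paper: embed $R[t]$ into $K[s]$ via Lemma~\ref{FMlemma3}, apply Lemma~\ref{FMlemma2} to the identity $x_1^{d_1}+\cdots+x_n^{d_n}+l_2^{e_2}+\cdots+l_n^{e_n}=0$ to force the $x_i$ (and $l_i$) to be constants, and conclude using $\mathcal{ML}(R[t])\subseteq\mathcal{ML}(R)$. Your treatment is in fact slightly more careful than the paper's---you explicitly count the $2n-1$ terms to match the bound $\tfrac{1}{2n-3}$, and you correctly flag the sub-sum/coprimality hypothesis of Lemma~\ref{FMlemma2} as the point requiring verification, which the paper simply asserts.
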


Here we know that Conjecture \ref{conj} is also valid as to Finston-Maubach domains. The proof in Proposition \ref{FMproof} is nothing new but to imitate the proof of Finston and Maubach for the almost rigidity of Finston-Maubach domains.

\section{Comments}
This work was originally started as an extension of my Bachelor thesis. At that time, I planned to study Conjecture \ref{conj} by observing examples. However, I found that most of existing methods of computing Makar-Limanov invariants rely on specific structure of an affine domain. So it may be hard to obtain a promising way to solve the conjecture by them. Additionally, my first semester for a master degree will begin soon and I will hardly have time for this work. In this case, I decide to stop it here.

\vspace{2mm}
In my opinion, the core of this conjecture is to build a connection between the UFD property and Makar-Limanov invariant. It is difficult because the UFD itself is a really profound area, which is too fundamental to be given a description by locally nilpotent derivations. If we want to use current ways to compute Makar-Limanov invariants, we need to focus on the structure of $(f)$ in an affine UFD $\mathbbm{k}^{[n]}/(f)$. Or one can try to interpret Makar-Limanov invariants from the perspective of geometry, which may bring us some new ideas.

\vspace{2mm}
These three examples in this paper are computed by three different methods. The first one is simple and direct. The second one uses a famous technique in this area, called homo-generalization. Readers can find more information about this technique in \cite{Fr17}, \cite{KM00} and \cite{ML98}. The third example has some connection with the famous ABC-theorem and one can find more details in \cite{Bon09} and \cite{Fr17}.

\newpage

\end{document}